\theoremstyle{plain}
\newtheorem{thm}{Theorem}[section]
\newtheorem{theorem}[thm]{Theorem}
\newtheorem*{theorem*}{Theorem}
\newtheorem{lemma}[thm]{Lemma}
\newtheorem{proposition}[thm]{Proposition}
\theoremstyle{definition}
\newtheorem{remark}[thm]{Remark}
\newtheorem{definition}[thm]{Definition}
\newtheorem{conjecture}[thm]{Conjecture}
\numberwithin{equation}{thm}
\title[Chevalley restriction theorem in characteristic $p$]{A higher-dimensional Chevalley restriction theorem for classical groups in characteristic $p$}
\author[]{Xiaopeng Xia}
\address{School of Mathematical Sciences, University of Science and Technology of China\\ No. 96 Jinzhai Road, Hefei, Anhui 230026, P. R. China}
\email{xpxia@mail.ustc.edu.cn}
\date{}
\begin{document}
\subjclass[2020]{14L30 (primary), 20G05 (secondary).}
  \keywords{Chevalley Restriction Theorem, Invariant Theory, Commuting Scheme, Good Filtration, Semisimple, Regularity}
 \begin{abstract}
 We establish a theorem concerning the commuting scheme in characteristic 
$p$. As a significant application of this theorem, we derive an explicit lower bound for the characteristic 
$p$, ensuring the validity of the higher-dimensional Chevalley restriction theorem for classical groups.

 \end{abstract}
 \maketitle  

\section{Introduction}

Let $G$ be a reductive group over an algebraically closed field $\mathbb{K}$ with Lie algebra $\mathfrak{g}$. For an integer $d\geq 2$,  let  $\mathfrak{C}^d_{\mathfrak{g}}\subset \mathfrak{g}^d$ be the  commuting scheme, which is defined as the scheme-theoretic fiber of the commutator map over the zero 
\[
\mathfrak{g}^d\rightarrow \prod\limits_{i<j} \mathfrak{g}, \ \ (x_1,\cdots, x_d)\mapsto \prod_{i<j}[x_i, x_j].
\] 
Its underlying variety (the reduced induced closed subscheme) $\mathfrak{C}^d_{\mathfrak{g},red}$ is called the commuting variety. As a set, $\mathfrak{C}^d_{\mathfrak{g},red}$ consists of $d$-tuples $(x_1,\cdots, x_d)\in \mathfrak{g}^d$ such that $[x_i,x_j]=0$, for all  $1\leq i,j\leq d$. It is a long-standing open question whether or not $\mathfrak{C}^d_{\mathfrak{g}}$ is reduced, that is $\mathfrak{C}^d_{\mathfrak{g}}=\mathfrak{C}^d_{\mathfrak{g},red}$.  When the characteristic of $\mathbb{K}$ ($\rm{char} \ \mathbb{K}$ for short) is zero, Charbonnel [\ref{Charbonnel}] recently claims a proof for $\mathfrak{C}^2_{\mathfrak{g}}=\mathfrak{C}^2_{\mathfrak{g},red}$. Although there is no adequate evidence to expect $\mathfrak{C}^d_{\mathfrak{g}}$ is reduced for general $d$, one can study the categorical quotient $\mathfrak{C}^d_{\mathfrak{g}}/\!\!/G$ and ask the same question. Here $G$ acts on $\mathfrak{g}^d$ by the diagonal adjoint action, and the action leaves $\mathfrak{C}^d_{\mathfrak{g}}$ stable. 

Let $T$ be a maximal torus of $G$ and $\mathfrak{t}$ be the Lie algebra of $T$. Then the Weyl group $W:=N_G(T)/T$ acts on $\mathfrak{t}^d$ diagonally. The embedding $\mathfrak{t}^d\hookrightarrow \mathfrak{g}^d$ factors through $\mathfrak{C}^d_{\mathfrak{g}}$ and induces the natural morphism \[\Phi: \mathfrak{t}^d/\!\!/W \rightarrow \mathfrak{C}^d_{\mathfrak{g}}/\!\!/ G.\]In studying the Hitchin morphism from the moduli stack of principle $G$-Higgs bundles on a proper smooth variety $X$ of dimension $d\ge 2$, Chen and Ng\^{o} \cite{Chen-Ngo} are led to

\begin{conjecture}[Chen-Ng\^{o}]\label{conj}
The morphism $\Phi: \mathfrak{t}^d/\!\!/W\rightarrow \mathfrak{C}^d_{\mathfrak{g}}/\!\!/G$ is an isomorphism. 
\end{conjecture}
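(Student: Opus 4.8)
\medskip
\noindent\textit{A proof proposal} (for $G$ classical; in characteristic $p$ an explicit lower bound on $p$ is assumed). Both $\mathfrak{t}^d/\!\!/W$ and $\mathfrak{C}^d_{\mathfrak{g}}/\!\!/G$ are affine, so the task is to prove that the comorphism
\[
\Phi^{\#}\colon\ \mathbb{K}[\mathfrak{C}^d_{\mathfrak{g}}]^{G}\ \longrightarrow\ \mathbb{K}[\mathfrak{t}^d]^{W}
\]
is an isomorphism, and I would prove surjectivity and injectivity separately. For orientation, $\Phi$ is surjective on closed points: when $p$ is good for $G$, pairwise commuting semisimple elements of $\mathfrak{g}$ lie in a common maximal torus, and the closed $G$-orbit in the closure of any orbit in $\mathfrak{C}^d_{\mathfrak{g}}$ is the orbit of the tuple of componentwise semisimple parts; hence every fibre of $\mathfrak{C}^d_{\mathfrak{g}}\to\mathfrak{C}^d_{\mathfrak{g}}/\!\!/G$ meets $\mathfrak{t}^d$. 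In particular $\Phi$ is dominant, so $\Phi^{\#}$ is injective \emph{if and only if} $\mathbb{K}[\mathfrak{C}^d_{\mathfrak{g}}]^{G}$ is reduced, and that reducedness is where all the work lies.

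\medskip
\noindent\textit{Surjectivity.} Since $\mathfrak{t}^d\hookrightarrow\mathfrak{C}^d_{\mathfrak{g}}$ factors through $\mathfrak{g}^d$, the map $\Phi^{\#}$ factors through $\mathbb{K}[\mathfrak{g}^d]^{G}$, so it suffices that the restriction $\mathbb{K}[\mathfrak{g}^d]^{G}\to\mathbb{K}[\mathfrak{t}^d]^{W}$ be surjective. For $\mathfrak{gl}_n$ and $\mathfrak{sl}_n$ this is Weyl's theorem on vector invariants: polarizations of the coefficients of the characteristic polynomial generate $\mathbb{K}[\mathfrak{g}^d]^{G}$ and restrict to a generating set of the multisymmetric functions $\mathbb{K}[\mathfrak{t}^d]^{W}$, provided $p$ exceeds an explicit bound so that the invariant rings are generated as in characteristic $0$. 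For $\mathfrak{so}_n$ and $\mathfrak{sp}_{2n}$ one instead invokes the first fundamental theorem for the orthogonal and symplectic groups (valid for $p$ above an explicit bound) and checks directly that the resulting generators of $\mathbb{K}[\mathfrak{g}^d]^{G}$ restrict onto generators of $\mathbb{K}[\mathfrak{t}^d]^{W}$. This is exactly the step that forces $G$ to be classical: for exceptional $G$ the polarizations of the basic invariants do not generate $\mathbb{K}[\mathfrak{g}^d]^{G}$.

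\medskip
\noindent\textit{Reducedness of $\mathbb{K}[\mathfrak{C}^d_{\mathfrak{g}}]^{G}$ --- the main obstacle.} By the previous step $\Phi$ is a closed immersion; combined with surjectivity on closed points and the reducedness of $\mathbb{K}[\mathfrak{t}^d]^{W}$, this shows that the defining ideal of $\mathfrak{t}^d/\!\!/W$ in $\mathfrak{C}^d_{\mathfrak{g}}/\!\!/G$ is the nilradical, so the conjecture for classical $G$ reduces to the assertion that \emph{the commuting scheme has no nonzero $G$-invariant nilpotent function}. This is where a structural theorem on $\mathfrak{C}^d_{\mathfrak{g}}$ in characteristic $p$ is needed, and the route I would take is via good filtrations. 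For $p$ above the bound, $\mathfrak{g}$ has a good filtration as a $G$-module, hence so does $\mathbb{K}[\mathfrak{g}^d]=S\bigl((\mathfrak{g}^{\ast})^{\oplus d}\bigr)$ by the Donkin--Mathieu theorem; the key point is to propagate a good filtration to $\mathbb{K}[\mathfrak{C}^d_{\mathfrak{g}}]$ and to $\mathbb{K}[\mathfrak{C}^d_{\mathfrak{g},\mathrm{red}}]$. Granting this, the nilradical $N$ acquires a good filtration, its higher $G$-cohomology vanishes, and $N^{G}$ is measured, degree by degree, by the discrepancy between the Hilbert series of $\mathbb{K}[\mathfrak{C}^d_{\mathfrak{g}}]^{G}$ --- which one reads off from $\mathbb{K}[\mathfrak{g}^d]^{G}$ using $H^{1}(G,I)=0$ --- and that of $\mathbb{K}[\mathfrak{C}^d_{\mathfrak{g},\mathrm{red}}]^{G}$; one then shows these agree by describing $\mathfrak{C}^d_{\mathfrak{g},\mathrm{red}}$ well enough (its principal component $\overline{G\cdot\mathfrak{t}^d}$ is normal with non-smooth locus of codimension $\ge 2$, controlled on the regular semisimple stratum by a Luna slice argument reducing the local geometry to a Cartan subalgebra, and its other components contribute no further $G$-invariants because they degenerate into $\overline{G\cdot\mathfrak{t}^d}$). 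A Frobenius-splitting argument for $\mathfrak{C}^d_{\mathfrak{g}}/\!\!/G$ is a possible alternative for ruling out invariant nilpotents, again only for $p$ large.

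\medskip
\noindent The hard part is thus the structural input --- a good filtration on the coordinate ring of the commuting scheme and of its reduction, together with normality and the codimension estimate for $\overline{G\cdot\mathfrak{t}^d}$ --- in characteristic $p$, since the full reducedness of $\mathfrak{C}^d_{\mathfrak{g}}$ is open even in characteristic $0$. The explicit lower bound on $p$ is precisely what makes available, simultaneously, the good-filtration machinery for $\mathfrak{g}$, the fundamental theorems of classical invariant theory, the separability of orbit maps near semisimple tuples, and the slice theorem.
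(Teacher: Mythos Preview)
You have correctly isolated the decisive tool---a good filtration on the defining ideal $I$ of $\mathfrak{C}^d_{\mathfrak{g}}$ in $\mathbb{K}[M_n(\mathbb{K})^d]$, giving $H^1(G,I)=0$---but you then deploy it along an unnecessarily hard route. In the paper the vanishing $H^1(G,I)=0$ is used only to conclude that the restriction $\varphi\colon\mathbb{K}[M_n(\mathbb{K})^d]^G\to\mathbb{K}[\mathfrak{C}^d_{\mathfrak{g}}]^G$ is \emph{surjective}. The remaining step is already in the literature in arbitrary characteristic: Vaccarino (for $GL_n$, via the subalgebra $C_P$ generated by characteristic--polynomial coefficients, together with Donkin's description of $\mathbb{K}[M_n(\mathbb{K})^d]^{GL_n}$) and Song--Xia--Xu (for $O_n$, $SO_n$, $Sp_n$) prove that $\Phi$ restricted to $\mathrm{Im}(\varphi)$ is an isomorphism onto $\mathbb{K}[\mathfrak{t}^d]^W$. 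Hence once $\mathrm{Im}(\varphi)=\mathbb{K}[\mathfrak{C}^d_{\mathfrak{g}}]^G$, injectivity comes for free and no direct attack on the reducedness of $\mathbb{K}[\mathfrak{C}^d_{\mathfrak{g}}]^G$ is needed.

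Your proposed reducedness route has genuine gaps. First, $H^1(G,I)=0$ does not let you ``read off'' the Hilbert series of $\mathbb{K}[\mathfrak{C}^d_{\mathfrak{g}}]^G$: it only yields $\mathbb{K}[\mathfrak{C}^d_{\mathfrak{g}}]^G=\mathbb{K}[\mathfrak{g}^d]^G/\bar I^{\,G}$, and $\bar I^{\,G}$ is not independently known. Second, there is no mechanism for producing a good filtration on $\mathbb{K}[\mathfrak{C}^d_{\mathfrak{g},\mathrm{red}}]$ or on the nilradical: the paper's argument for $I$ hinges on a Castelnuovo--Mumford regularity bound for $\bar I$ (which is generated in degree $2$), Serre's semisimplicity criterion for tensor and exterior products below the characteristic bound, and a degree-by-degree induction via the Koszul complex on $\bar R_1$; none of this transfers to $\sqrt{\bar I}$, whose generation degree is uncontrolled. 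Third, the normality of $\overline{G\cdot\mathfrak{t}^d}$ and the codimension-$2$ estimate you invoke are not established in positive characteristic, and for $d\ge 3$ the commuting variety need not be irreducible, so the claim that non-principal components ``contribute no further $G$-invariants'' is unsupported.
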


When $d=1$ and $\rm{char} \ \mathbb{K} =0$, Conjecture \ref{conj} is simply the classical Chevalley restriction theorem. Since in the context of Higgs bundles, $d$ is the dimension of the underlying variety $X$, we consider the conjecture as a higher-dimensional analogue of Chevalley restriction theorem. Note when $d=2$ and $\rm{char} \ \mathbb{K} =0$, this conjecture is a special  case (degree zero part) of a more general conjecture proposed by Berest et al. [\ref{Berest et al.}]. 

If $\rm{char} \ \mathbb{K} =0$, Conjecture \ref{conj} is  known  to hold for  $G=GL_n(\mathbb{K})$ (Vaccarino [\ref{Vaccarino}], Domokos [\ref{Domokos}], and later Chen-Ng\^{o} [\ref{Chen-Ngo}] independently; see also Gan-Ginzburg [\ref{Gan-Ginzburg}] for case $d=2$), for $G=Sp_{n}(\mathbb{K})$ (Chen-Ng\^{o} [\ref{Chen-Ngo-Symplectic}]), for $G=O_n(\mathbb{K}),SO_n(\mathbb{K})$ (Song-Xia-Xu [\ref{S-X-X}]) and for connected reductive groups (Li-Nadler-Yun [\ref{L-N-Y:commuting stack}] for case $d=2$). A weaker version $\mathfrak{t}^d/\!\!/W\xrightarrow{\sim} \mathfrak{C}^d_{\mathfrak{g},red}/\!\!/G$ is proved by Hunziker [\ref{Hunziker}] if $G$ is of type $A,B,C,D$ or $G_2$.

If $\rm{char}\ \mathbb{K} >0$, Conjecture \ref{conj} is largely open. However, the weaker version $\mathfrak{t}^d/\!\!/W\xrightarrow{\sim} \mathfrak{C}^d_{\mathfrak{g},red}/\!\!/G$ is proved by Vaccarino [\ref{Vaccarino}] for $G=GL_n(\mathbb{K})$ and by Song-Xia-Xu [\ref{S-X-X}] for $G=Sp_n(\mathbb{K}),O_n(\mathbb{K}),SO_n(\mathbb{K})$.

The main purpose of the article is to prove Conjecture \ref{conj} for classical groups in case that the characteristic $p$ is greater than an explicit bound. To be more precise, our main result is the following (see
Theorems \ref{main thm: good fil} and \ref{main thm})

\begin{theorem}\label{main thm in introduction}
Suppose $n\geq 2$, $d\geq 1$, and  $G$ is one of groups $GL_n(\mathbb{K})$, $O_n(\mathbb{K})$, $SO_n(\mathbb{K})$ or $Sp_n(\mathbb{K})$. 
\begin{enumerate}
\item Let $R=\mathbb{K}[M_n(\mathbb{K})^d]$, $\Bar{R}=\mathbb{K}[\mathfrak{g}^d]$, $I=\ker(R\to \mathbb{K}[\mathfrak{C}^d_{\mathfrak{g}}])$ and $\Bar{I}=\ker(\Bar{R}\to\mathbb{K}[\mathfrak{C}^d_{\mathfrak{g}}])$.
If $\mathrm{char}\ \mathbb{K}>2(\mathrm{reg}_{\Bar{R}}(\Bar{I})-1)(n-1)+\frac{d(n^3-n)}{3}$ and $G$ is connected, then $G$-modules $\Bar{I}$ and $I$ have good filtrations.
\item If $\mathrm{char}\ \mathbb{K}>(12)^{2^{d\cdot\dim(\mathfrak{g})-4}}(2n-2)+\frac{d(n^3-n)}{3}$, then $\Phi: \mathfrak{t}^d/\!\!/W\rightarrow \mathfrak{C}^d_{\mathfrak{g}}/\!\!/G$ is an isomorphism. 
\end{enumerate} 
\end{theorem}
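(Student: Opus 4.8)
The plan is to derive both statements from the theory of good (Weyl) filtrations of rational $G$-modules together with reduction modulo $p$. I shall use two formal facts: (i) a rational $G$-module with a good filtration has vanishing positive cohomology after tensoring with any good-filtration module, so that the functor of $G$-invariants is exact on short exact sequences all of whose terms have good filtrations; and (ii) over $\mathbb{Z}$, both the property of having a good filtration and the formation of $G$-invariants are stable under base change. Granting part (1), the isomorphism of part (2) then follows by transporting to characteristic $p$ the characteristic-zero isomorphism $\mathfrak{t}^{d}_{\mathbb{Q}}/\!\!/W\xrightarrow{\ \sim\ }\mathfrak{C}^{d}_{\mathfrak{g},\mathbb{Q}}/\!\!/G$ of Vaccarino, Chen--Ng\^{o} and Song--Xia--Xu recalled above, and the explicit numerical bound in (2) is obtained from the bound in (1) by substituting an effective, characteristic-free upper estimate for $\mathrm{reg}_{\bar{R}}(\bar{I})$.

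For (1), I would first record that $\bar{R}=\mathbb{K}[\mathfrak{g}^{d}]=S((\mathfrak{g}^{*})^{\oplus d})$ has a good filtration: $\mathfrak{g}$ is self-dual through the (restricted) trace form, it carries a good filtration as a $G$-module once $p$ is odd (automatically for $GL_{n}$), and the symmetric algebra on a good-filtration module inherits a good filtration as soon as $p$ is large relative to the weights occurring — which is the role of the summand $\tfrac{d(n^{3}-n)}{3}=d\,|2\rho|^{2}$, an upper bound for the corresponding quantity for $Sp_{n}$ and $SO_{n}$ as well; the same applies to $R=\mathbb{K}[M_{n}^{d}]$. From $0\to\bar{I}\to\bar{R}\to\mathbb{K}[\mathfrak{C}^{d}_{\mathfrak{g}}]\to 0$ and the good filtration of $\bar{R}$, the group $\mathrm{Ext}^{1}_{G}(\Delta(\mu),\bar{I})$ is the cokernel of $\operatorname{Hom}_{G}(\Delta(\mu),\bar{R})\to\operatorname{Hom}_{G}(\Delta(\mu),\mathbb{K}[\mathfrak{C}^{d}_{\mathfrak{g}}])$ for every dominant $\mu$, so it suffices to prove that $\mathbb{K}[\mathfrak{C}^{d}_{\mathfrak{g}}]$ has a good filtration and that these cokernels vanish. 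Both are checked by comparison over $\mathbb{Z}$: the commuting scheme is cut out over $\mathbb{Z}$ by the entries of $\sum_{i<j}[x_{i},x_{j}]$ (quadrics) and the linear equations defining $\mathfrak{g}\subset\mathfrak{gl}_{n}$, so a Castelnuovo--Mumford/Gr\"obner analysis shows that, once $p$ exceeds a bound governed by $\mathrm{reg}_{\bar{R}}(\bar{I})$ and the semisimple rank — this is the term $2(\mathrm{reg}_{\bar{R}}(\bar{I})-1)(n-1)$, the factor $n-1$ bounding the highest weights that can arise below the relevant degrees — the graded $G_{\mathbb{Z}}$-module $(\bar{R}/\bar{I})_{\mathbb{Z}}$ is $\mathbb{Z}$-flat and, in each degree, carries a filtration whose subquotients are the integral induced modules $H^{0}_{\mathbb{Z}}(\mu)$; these base-change to $\nabla(\mu)_{\mathbb{K}}$, yielding the good filtration of $\mathbb{K}[\mathfrak{C}^{d}_{\mathfrak{g}}]$, and the same flatness forces the surjectivity of the relevant $\operatorname{Hom}$-maps and hence the good filtration of $\bar{I}$. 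Finally, from $0\to J\to I\to\bar{I}\to 0$ with $J$ the linear ideal of $\mathfrak{g}^{d}\subset M_{n}^{d}$ — which has a good filtration since $R$ and $\bar{R}=R/J$ do and the corresponding cokernel vanishes trivially in low degree — the module $I$ is an extension of good-filtration modules, hence good.

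For (2), with (1) available over $\mathbb{Z}$ the module $(\bar{R}/\bar{I})_{\mathbb{Z}}$ is $\mathbb{Z}$-flat with a good filtration; $\mathbb{Z}$-flatness gives $(\bar{R}/\bar{I})_{\mathbb{Z}}\otimes_{\mathbb{Z}}\mathbb{K}=\mathbb{K}[\mathfrak{C}^{d}_{\mathfrak{g}}]$ (no relation is lost modulo $p$), and the good filtration makes $G$-invariants commute with this base change, so $\mathbb{K}[\mathfrak{C}^{d}_{\mathfrak{g}}]^{G}=(\bar{R}/\bar{I})^{G_{\mathbb{Z}}}_{\mathbb{Z}}\otimes_{\mathbb{Z}}\mathbb{K}$. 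The Chevalley restriction morphism $(\bar{R}/\bar{I})^{G_{\mathbb{Z}}}_{\mathbb{Z}}\to\mathbb{Z}[\mathfrak{t}^{d}]^{W}$ is injective, being an isomorphism over $\mathbb{Q}$, and becomes surjective after inverting a controlled integer — this is the integral content of the reduced statement $\mathbb{K}[\mathfrak{t}^{d}]^{W}\xrightarrow{\ \sim\ }\mathbb{K}[\mathfrak{C}^{d}_{\mathfrak{g},red}]^{G}$ of Vaccarino and Song--Xia--Xu, whose generators (mixed traces, Pfaffians, coefficients of characteristic polynomials) have degree bounded in terms of $n$ and $d$. Tensoring with $\mathbb{K}$ and using $\mathbb{Z}[\mathfrak{t}^{d}]^{W}\otimes_{\mathbb{Z}}\mathbb{K}=\mathbb{K}[\mathfrak{t}^{d}]^{W}$ (valid for $p\nmid|W|$) then yields that $\Phi$ is an isomorphism, the finitely many excluded primes being bounded by the same effective estimates used below; the case $G=O_{n}$, excluded from (1), is recovered from $G=SO_{n}$ by taking $\mathbb{Z}/2$-invariants, which is exact since $p$ is odd. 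It remains only to make the bound explicit: since $\bar{I}$ is generated in degrees $\leq 2$ in $d\cdot\dim(\mathfrak{g})\leq dn^{2}$ variables, a Bayer--Mumford-type bound estimates $\mathrm{reg}_{\bar{R}}(\bar{I})$ by a quantity doubly exponential in $d\cdot\dim(\mathfrak{g})$, of the shape $(12)^{2^{\,d\cdot\dim(\mathfrak{g})-4}}$, and feeding this into the bound of (1) produces the bound of (2).

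The genuine obstacle I anticipate is the good-filtration assertion for $\mathbb{K}[\mathfrak{C}^{d}_{\mathfrak{g}}]$, equivalently for $\bar{I}$: one must control how the $G$-module and Weyl-filtration structure of the coordinate ring of a generally non-reduced and singular commuting scheme can degenerate at small primes, and pin down precisely which highest weights — equivalently, which degrees — must be checked and how this interlocks with $\mathrm{reg}_{\bar{R}}(\bar{I})$; this is exactly what is quantified by the hypothesis $p>2(\mathrm{reg}_{\bar{R}}(\bar{I})-1)(n-1)+\tfrac{d(n^{3}-n)}{3}$. By comparison, the effective-regularity input and the passage to the explicit constant in (2) are routine applications of effective commutative algebra.
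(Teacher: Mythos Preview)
Your proposal identifies the right target---the good filtration of $\bar{I}$---but the mechanism you sketch does not actually reach it, and it diverges substantially from the paper's argument.

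The paper does \emph{not} proceed by reduction from $\mathbb{Z}$ or by showing that $\mathbb{K}[\mathfrak{C}^d_{\mathfrak{g}}]$ is $\mathbb{Z}$-flat. Instead it works directly in characteristic $p$ and argues degree by degree. The two ingredients you are missing are: (a) Serre's criterion that $\bigotimes_j \wedge^{i_j}V_j$ is semisimple whenever $\sum_j i_j(\dim V_j - i_j)<p$, and (b) the Koszul complex computing the Betti numbers of $\bar{I}$. The constant $\tfrac{d(n^3-n)}{3}$ is not $d|2\rho|^2$; it is the maximum of $\sum_j i_j(\dim V - i_j)$ over the relevant exterior decompositions (this is the content of the paper's combinatorial Lemma~2.1) and it is precisely what makes Serre's bound apply. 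With (a), every $\bar{I}_i$ for $i\le \alpha:=\mathrm{reg}_{\bar R}(\bar I)$ is a quotient of a semisimple module with good filtration---this is where the term $2(\alpha-1)(n-1)$ enters, bounding the weights appearing in $(V^*\otimes V)^{\otimes(\alpha-1)}$. For $i>\alpha$, Castelnuovo--Mumford regularity says the Koszul complex $\cdots\to\bar{I}_{i-1}\otimes\wedge^1\bar R_1\to\bar{I}_i\to 0$ is exact, so $\bar{I}_i$ inherits a good filtration from the lower $\bar{I}_k$ by induction; a separate check is needed when $i\le\alpha+\dim\bar R$, where the leftmost kernel must itself be shown to have a good filtration, again via the semisimplicity lemma applied to submodules of $(V^*\otimes V)^{\otimes(\alpha-1)}\otimes\wedge^\bullet((V^*\otimes V)^{\oplus d})$. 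Your flatness-over-$\mathbb{Z}$ route cannot substitute for this: the commuting scheme is not known to be flat over $\mathbb{Z}$ (indeed not even known to be reduced), and ``$\mathbb{Z}$-flat once $p$ exceeds a bound'' is not a statement about $\mathbb{Z}$-flatness.

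For part~(2) you also take a different route. The paper does not transport the characteristic-zero isomorphism; it uses results of Vaccarino and Song--Xia--Xu that already hold in characteristic $p$ and identify $\mathbb{K}[\mathfrak{t}^d]^W$ with a subalgebra of $\mathbb{K}[\mathfrak{C}^d_{\mathfrak{g}}]^G$ (namely $C_P$ for $GL_n$, or $\mathrm{Im}(\varphi)$ in the other cases). The only missing piece is the surjectivity of $\varphi:\mathbb{K}[M_n^d]^G\to\mathbb{K}[\mathfrak{C}^d_{\mathfrak{g}}]^G$, and this follows immediately from $H^1(G,I)=0$, which is a consequence of part~(1). Your base-change argument would additionally require knowing that $\mathbb{Z}[\mathfrak{t}^d]^W\to(\bar R/\bar I)_{\mathbb{Z}}^{G_{\mathbb{Z}}}$ is surjective up to a controlled denominator, which you have not established. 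The doubly-exponential bound is correct and comes from the Chardin--Fall--Nagel estimate $\mathrm{reg}(\bar I)\le [3\cdot 2^2\cdot 1]^{2^{d\dim\mathfrak{g}-4}}+1$ for an ideal generated in degree $2$; that part of your outline is fine.
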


\subsection*{Acknowledgments}
 X. X. would like to thank Jinxing Xu for helpful discussions related to this work. During the preparation of the article, X. X. was partially supported by the Innovation Program for Quantum Science and Technology (2021ZD0302902).

\section{Notations and preliminaries}

In this section we fix some notations and then record some useful definitions and propositions that will be used frequently in the subsequent sections.  

Throughout this paper, $\mathbb{K}$ is an algebraically closed field with $\rm{char} \ \mathbb{K} >2$. We denote $M_n(R)$ as the set of $n\times n$ matrices over a commutative ring $R$, and for a matrix $M$, we denote $M^t$ as the transpose, and $M(i, j)$ as the $(i, j)$-entry of $M$ . We denote the coordinate ring of an affine $\mathbb{K}$-scheme $X$ by $\mathbb{K}[X]$. In particular, 
if $V$ is a $\mathbb{K}$-linear space, $\mathbb{K}[V]$ means the $\mathbb{K}$-algebra of polynomial functions on $V$. If a group $G$ acts $\mathbb{K}$-linearly on $V$, then $G$ acts naturally  on $\mathbb{K}[V]$ by $g\cdot f \ (v) =f(g^{-1}\cdot v)$, for $g\in G, \ f\in \mathbb{K}[V], \ v\in V$.  The $\mathbb{K}$-algebra  of $G$-invariant polynomials on $V$ is denoted by  $\mathbb{K}[V]^G$. 

We will need the following lemma. 

\begin{lemma}\label{lemma:max}
    Given a positive integer $n>1$. For any integer $0<m\leq n$, let 
    \[
    F_m(x_1,\dots,x_{2m})=\sum_{i=1}^m (x_i-x_{i-1})x_{m+i}(n-x_{m+i})+(x_{m+i}-x_{m+i+1})x_{i}(n- x_{i})
    \]
    be a function of $2m$ variables on $\mathbb{R}^m$, where $x_0=x_{2m+1}=0$, and let $V_m=\{(a_1,\dots,a_{2m})\in \mathbb{Z}^{2m}\mid 0<a_1<\dots<  a_{m}\leq n \geq  a_{m+1}>\dots > a_{2m}> 0\}$. Then 
    \[
    \max\{F_m(\alpha)\mid  1\leq m\leq n,\alpha\in V_m\}= \frac{n^3-n}{3}.
    \]  
\end{lemma}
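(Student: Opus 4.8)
The plan is to reduce the optimization to a clean combinatorial problem and then extremize in stages. First I would fix $m$ and analyze $F_m$ as a function on the discrete box $V_m$. Writing $g(x)=x(n-x)$, the summand pairs $(x_i-x_{i-1})g(x_{m+i})$ and $(x_{m+i}-x_{m+i+1})g(x_i)$; with the convention $x_0=x_{2m+1}=0$ one sees that $F_m$ telescopes partially. The key observation I would exploit is that $F_m(\alpha)$ can be rewritten as a sum over "rectangles": for a point $\alpha=(a_1<\dots<a_m\le n\ge a_{m+1}>\dots>a_{2m})$ in $V_m$, the value $F_m(\alpha)$ equals $\sum (\text{increment in one coordinate}) \cdot g(\text{the other coordinate})$, which is a Riemann-sum-type expression. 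Indeed, thinking of the "staircase" region bounded by the two decreasing/increasing sequences, $F_m(\alpha)$ is (up to the $g$-weight) twice the area swept, so maximizing $F_m$ over all $m$ and all $\alpha$ amounts to maximizing a weighted lattice sum $\sum_{k} g(k)\cdot(\text{multiplicity of }k)$ subject to the staircase constraints.

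Second, I would argue that the maximum is attained at the "full staircase": $m=n$ and $\alpha=(1,2,\dots,n,n,n-1,\dots,1)$, i.e. $a_i=i$ and $a_{m+i}=n+1-i$ — or more precisely at whichever extreme configuration makes both sequences run through all of $1,\dots,n$. One shows that inserting a missing value can only increase $F_m$ (a discrete exchange/monotonicity argument: if some integer in $(0,n]$ is skipped by the increasing sequence, splitting the corresponding jump into two steps adds a nonnegative term because $g\ge 0$ on $[0,n]$), and similarly for the decreasing sequence; hence it suffices to evaluate $F_n$ at the unique maximal lattice configuration. At that configuration the two sums combine and collapse to $\sum_{k=1}^{n} k(n-k)$ plus a symmetric copy, and a direct computation gives
\[
\sum_{k=1}^{n} k(n-k) = n\cdot\frac{n(n-1)}{2} - \frac{(n-1)n(2n-1)}{6} = \frac{n^3-n}{6},
\]
so the full expression $F_n$ at this point equals $\dfrac{n^3-n}{3}$, matching the claimed bound.

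Third, I would verify that no configuration beats this. The delicate point — and the main obstacle — is that $V_m$ is a set of strictly increasing/decreasing integer tuples, so one cannot freely perturb a single coordinate; the right tool is a finite combinatorial exchange argument showing that (a) one may assume both sequences are "as long and as spread out as possible" without decreasing $F_m$, and (b) among all such maximal-length configurations the value is constant and equals $\frac{n^3-n}{3}$. Concretely, I expect to phrase this as: for any $\alpha\in V_m$, there is $\alpha'\in V_n$ with $F_n(\alpha')\ge F_m(\alpha)$, obtained by repeatedly refining jumps; since $g(x)=x(n-x)\ge 0$ for $0\le x\le n$, every refinement step contributes a nonnegative increment, and the fully refined configuration is forced. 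Checking that the increments are genuinely nonnegative and that the refinement terminates at the stated configuration is the routine-but-careful part; the computation of the closing sum is then immediate via the formula above.
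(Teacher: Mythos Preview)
Your configuration $(1,2,\dots,n,n,n-1,\dots,1)$ does lie in $V_n$ and does give the value $\tfrac{n^3-n}{3}$, so the lower bound is fine. The gap is in the upper bound: the refinement step ``splitting a jump adds a nonnegative term because $g\ge 0$'' is not correct as stated. The two halves of $\alpha$ are coupled \emph{index by index} through the pairing $x_i\leftrightarrow x_{m+i}$, so inserting a value in the first half forces you to insert one in the second half as well, and this reshuffles the pairing in a way that can strictly decrease $F$. Concretely, take $n=10$, $m=2$, $\alpha=(6,8,8,6)\in V_2$; then $F_2(\alpha)=288$. Refining the jump $6<8$ in both halves by inserting $7$ gives $\alpha'=(6,7,8,8,7,6)\in V_3$ with $F_3(\alpha')=282<288$. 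So ``$g\ge 0$'' is not the right monotonicity; what matters is the sign of expressions like $(d-a_{m+j})\bigl(g(c)-g(a_{j-1})\bigr)+(c-a_{j-1})\bigl(g(d)-g(a_{m+j})\bigr)$, and since $g$ is a downward parabola these can be negative when the relevant entries sit to the right of $n/2$.

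Your exchange idea can perhaps be rescued (for instance, in the counterexample one \emph{can} increase $F$ by inserting at the ends rather than in the middle), but that requires specifying a refinement rule and proving it never decreases $F$ --- neither of which you have done. The paper avoids this difficulty by a different decomposition: it first freezes $(x_1,\dots,x_m)$ and maximizes over the second block $(x_{m+1},\dots,x_{2m})$, which is a concave quadratic with explicit maximizer $x_{m+i}=\tfrac{2n-x_i-x_{i-1}}{2}$ (then rounded to an integer). This reduces the problem to a one--block function $H_m(x_1,\dots,x_m)$, on which a careful comparison of neighbouring lattice points shows the optimum is $(1,2,\dots,m)$, and finally $H_m(1,\dots,m)=\tfrac{n^3-(n-m)^3-m}{3}$ is maximized at $m=n$. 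If you want to keep your combinatorial picture, the missing ingredient is exactly an analogue of this second-half optimization before you start refining the first half.
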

\begin{proof}
    Let $M=\max\{F_m(\alpha) \mid 1\leq m\leq n,\alpha\in V_m\}$. For any $1\leq m\leq n$, consider these equations
    \[
    \begin{cases}
        \frac{\partial F_m}{\partial x_{m+i}}=(x_i-x_{i-1})(2n-2x_{m+i}-x_i-x_{i-1})=0, & \forall 1 \leq i\leq m \\
        0=x_0< x_1 < \dots< x_{m}\leq n
    \end{cases}
    \]
    By solving the above system of equations, we see that
    \[
    x_{m+i}=\frac{2n-x_i-x_{i-1}}{2},1\leq i\leq m.
    \]
    Since $\frac{\partial^2 F_m}{\partial x_{m+i}\partial x_{m+j}}=-2\delta_{ij}$ for any $1\leq i,j\leq m$, where $\delta_{ij}$ is the Kronecker symbol, we have
    \begin{align}\label{equ:F-F(a)=-XAX^t}
    \begin{split}
          F_m(x_1,\dots,x_{2m}) &- F_m(x_1,\dots,x_{m},\frac{2n-x_1-x_{0}}{2},\dots,\frac{2n-x_m-x_{m-1}}{2}) \\
      & = -\sum_{i=1}^m (x_{m+i}-\frac{2n-x_i-x_{i-1}}{2})^2\leq 0.
    \end{split}
    \end{align}
    Let $H_m(x_1,\dots,x_{m})=F_m(x_1,\dots,x_{m},f_1(x_1,\dots,x_{m}),\dots,f_m(x_1,\dots,x_{m}))$ be a function of $m$ variables on $\mathbb{Z}^m$ and $W_m=\{(a_1,\dots,a_{m})\in \mathbb{Z}^{m}\mid 0<a_1<\dots<  a_{m}\leq n\}$, where $f_i(x_1,\dots,x_{m})=\frac{2n-x_i-x_{i-1}}{2}+\frac{1-(-1)^{x_i+x_{i-1}}}{4}$ for any $1\leq i\leq m$ and $x_0=0$.
    Note that if $m<n$ and  $(a_1,\dots,a_m)\in \mathbb{Z}^m$, then
    \begin{equation}\label{equ:H_m(a)=H_m+1(0,a)}
        H_m(a_1,\dots,a_m)=H_{m+1}(0,a_1,\dots,a_m)
    \end{equation}
    If $\alpha\in W_m$, then $n \geq  f_1(\alpha)>\dots > f_{m}(\alpha)> 0$ and $f_i(\alpha)\in \mathbb{Z}$ for any $1\leq i\leq m$. So it follows from (\ref{equ:F-F(a)=-XAX^t}) that 
    \[
    M=\max\{H_m(\alpha)\mid  1\leq m\leq n,\alpha\in W_m\}.
    \]

    Given a positive integer $1\leq m\leq n$, we will compare $H_m(\alpha)$ and $H_m(\alpha')$ in two cases for any $\alpha=(a_1,\dots,a_m),\alpha'=(a'_1,\dots,a'_m)\in\mathbb{Z}^{m}$.
    If $a_1<a'_1$, $a_2>1$ and $a_i=a'_i$ for any $2\leq i\leq m$, then 
    \begin{align*}
         & H_m(\alpha)-H_m(\alpha') \\
              = &\begin{cases}
              \frac{a_2(a'_{1}-a_1)}{4}\left(\frac{(-1)^{a_1}-(-1)^{a'_1}}{2(a'_{1}-a_1)}+a_1+a'_{1}-a_2 \right),  \ \textmd{ if } 2\mid  a_2;\\
             \frac{a_2(a'_{1}-a_1)}{4}\left(\frac{(-1)^{a'_1}-(-1)^{a_1}}{2(a'_{1}-a_1)}-\frac{a'_1(-1)^{a'_1}-a_1(-1)^{a_1}}{a_2(a'_{1}-a_1)}+a_1+a'_{1}-a_2 \right),  \ \textmd{ if } 2\nmid a_2.
             \end{cases}
    \end{align*}
    Hence
    \begin{equation}\label{equ:a_1}
        \begin{cases}
             H_m(\alpha)< H_m(\alpha'),  \ \textmd{ if }  a_1<a'_1< \frac{a_2}{2} \textmd{, or } 4\mid  a_2 \textmd{ and } a_1+1=a'_1=\frac{a_2}{2} ;\\
             H_m(\alpha)> H_m(\alpha'),  \ \textmd{ if }  \frac{a_2}{2}< a_1<a'_1 \textmd{, or } 4\mid  a_2 \textmd{ and } \frac{a_2}{2}=a_1=a'_1-1;\\
             H_m(\alpha)= H_m(\alpha'),  \ \textmd{ if }  a_2\equiv 2 \pmod{4} \textmd{ and } a_1,a'_1\in \{\frac{a_2}{2},\frac{a_2}{2}\pm 1\};\\
             H_m(\alpha)=H_m(\alpha') ,  \ \textmd{ if }  2\nmid a_2 \textmd{ and } a_1+1=a'_1=\frac{a_2+1}{2}.
        \end{cases}
    \end{equation}
    If there is $1< r <m$ such that $a_r=a_{r-1}+1=a'_{r}-1\leq a_{r+1}-2$ and $a_i=a'_i$ for any $1\leq i\neq r\leq m$, then
    \begin{align}\label{equ:a_r}
        \begin{split}
            & H_m(\alpha)-H_m(\alpha') \\
              = &\frac{a_{r+1}-a_r}{4}\left(\frac{1-(-1)^{a_{r+1}+a_r}}{2(a_{r+1}-a_r)}+1+(-1)^{a_{r+1}+a_r}-(a_{r+1}-a_r)\right)\\
              = &\begin{cases}
              < 0,  \ \textmd{ if } a_{r+1}-a_r>2;\\
              =0,  \ \textmd{ if } a_{r+1}-a_r=2.
             \end{cases}
        \end{split}
    \end{align}
        
   For any positive integer $r$ and any $\alpha=(a_1,\dots,a_r)\in \mathbb{Z}^r$, let $l(\alpha)$ be the number of elements in $\{i \mid a_i=i,1\leq i\leq r\}$.
   Take $\beta=(b_1,\dots,b_m)\in W_m$ for some $1\leq m\leq n$ such that $H_m(\beta)=M$ and $l(\beta)=\max\{l(\alpha)\mid \exists 1\leq r\leq n, \alpha\in W_r,H_r(\alpha)=M\}$. Assume $l(\beta)<m$. Then $m<n$.
   Note that if $b_i=i$ for some $1\leq i\leq m$ then $b_j=j$ for all $1\leq j\leq i$. Then $b_i=i$ for all $1\leq i\leq l(\beta)$.
   If $l(\beta)=0$, then $b_1\geq 2$. Let $\beta'=(1,b_1,\dots,b_m)$, then $\beta'\in W_{m+1}$ and $l(\beta')>l(\beta)$. 
   It follows from (\ref{equ:H_m(a)=H_m+1(0,a)}) and (\ref{equ:a_1}) that $H_m(\beta)=H_{m+1}(0,b_1,\dots,b_m)\leq H_{m+1}(\beta')$.
   This leads to a contradiction.
   If $l(\beta)\geq 1$ and $b_{l(\beta)+1}>l(\beta)+2$, let $\beta'=(b'_{1},\dots,b'_m)\in W_{m}$, where $b'_{l(\beta)}=b_{l(\beta)}+1$ and $b'_{j}=b_j$ for any $1\leq j\neq l(\beta) \leq m$. 
   It follows from (\ref{equ:a_1}) and (\ref{equ:a_r}) that $H_m(\beta)< H_{m}(\beta')$. This leads to a contradiction.
   If $l(\beta)\geq 1$ and $b_{l(\beta)+1}=l(\beta)+2$, let $\beta'=(1,2,\dots,l(\beta),l(\beta)+1,b_{l(\beta)+1},\dots,b_m)$, then $\beta'\in W_{m+1}$ and $l(\beta')>l(\beta)$.
    For any $1\leq i\leq l(\beta)$, let $\beta'_i=(b_{i1},\dots,b_{im})$, where 
    \[
    b_{ij}=
    \begin{cases}
        b_j+1,  \ \textmd{ if } l(\beta)-i<j\leq l(\beta); \\
        b_j,  \ \textmd{ if } j\leq l(\beta)-i \textmd{ or } j> l(\beta).
    \end{cases}
    \]
   It follows from (\ref{equ:H_m(a)=H_m+1(0,a)}), (\ref{equ:a_1}) and (\ref{equ:a_r}) that $H_m(\beta)=H_m(\beta'_1)=H_m(\beta'_2)=\dots =H_m(\beta'_{l(\beta)})=H_{m+1}(0,2,\dots,l(\beta)+1,b_{l(\beta)+1},\dots,b_m)=H_{m+1}(\beta') $. This leads to a contradiction.
   So $l(\beta)=m$ and $\beta=(1,2,\dots,m)$. Then 
   $H_m(\beta)=\frac{n^3-(n-m)^3-m}{3}$. It follows that
   \[
M=\max\{\frac{n^3-(n-m)^3-m}{3}\mid m \in \{1,2,\dots,n\}\}=\frac{n^3-n}{3}.
   \]
    
\end{proof}

\subsection{ Castelnuovo–Mumford regularity}

\begin{definition}
    Let $M$ be a finitely generated graded module over the polynomial ring $S = \mathbb{K}[x_1,...,x_n]$. The graded Betti number of $M$ is $\beta_{ij}^S(M)=\dim_{\mathbb{K}}\mathrm{Tor}_i^S(M,\mathbb{K})_j$, where the vector space $\mathrm{Tor}_i^S(M,\mathbb{K})_j$ is the degree $j$ component of the graded vector space $\mathrm{Tor}_i^S(M,\mathbb{K})$.
    The Castelnuovo–Mumford regularity of $M$ is $\mathrm{reg}_S(M)=\sup\{j-i|\beta_{ij}^S(M)\neq 0\}$.
\end{definition}

\begin{remark}
    The number $\beta_{0j}^S(M)$ is the number of elements of degree $j$ required among the minimal generators of $M$.
\end{remark}

\begin{proposition}\label{prop: complex}
    Let $M$ be a graded module over $S=\mathbb{K}[x_1,\dots,x_r]$. The graded Betti number $\beta_{ij}^S(M)$ is the dimension of the homology, at the term $M_{j-i}\otimes \wedge^i \mathbb{K}^{r}$, of the complex
\[
        0 \to M_{j-r}\otimes \wedge^{r} \mathbb{K}^{r}\to \dots \to M_{j-i}\otimes \wedge^{i} \mathbb{K}^{r} \to  \dots\to M_{j}\otimes \wedge^{0} \mathbb{K}^{r}\to 0.
\]
\end{proposition}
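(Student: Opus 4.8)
The plan is to identify the displayed complex with $M\otimes_S K_\bullet$ in internal degree $j$, where $K_\bullet$ is the Koszul complex on the variables $x_1,\dots,x_r$, and then invoke the standard computation of $\mathrm{Tor}$ against a free resolution. First I would recall the Koszul complex: set $V=\mathbb{K}^r$ with basis $e_1,\dots,e_r$, let $K_i=S\otimes_{\mathbb{K}}\wedge^i V$ in homological degree $i$, and equip it with the internal grading for which a basis wedge $e_{k_1}\wedge\cdots\wedge e_{k_i}$ has internal degree $i$, so that the internal degree $j$ component of $K_i$ is $S_{j-i}\otimes_{\mathbb{K}}\wedge^i V$. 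The Koszul differential
\[
d\colon K_i\to K_{i-1},\qquad s\otimes(e_{k_1}\wedge\cdots\wedge e_{k_i})\mapsto \sum_{l=1}^{i}(-1)^{l-1}\,x_{k_l}s\otimes(e_{k_1}\wedge\cdots\widehat{e_{k_l}}\cdots\wedge e_{k_i})
\]
is homogeneous of internal degree $0$, and it is classical that $K_\bullet$ is a (minimal) graded free resolution of the residue field $\mathbb{K}=S/(x_1,\dots,x_r)$.

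Next, since $\mathrm{Tor}$ may be computed from a free resolution of either argument, we have $\mathrm{Tor}^S_i(M,\mathbb{K})\cong H_i(M\otimes_S K_\bullet)$ as graded $\mathbb{K}$-vector spaces. Now $M\otimes_S K_i=M\otimes_S(S\otimes_{\mathbb{K}}\wedge^i V)\cong M\otimes_{\mathbb{K}}\wedge^i V$, and with the grading above its internal degree $j$ component is $M_{j-i}\otimes_{\mathbb{K}}\wedge^i V$; moreover $\wedge^i V=0$ for $i>r$, so the complex terminates at $i=r$. Therefore the internal degree $j$ strand of $M\otimes_S K_\bullet$ is exactly the complex written in the statement, and its homology at the spot $M_{j-i}\otimes\wedge^i\mathbb{K}^r$ is $\mathrm{Tor}^S_i(M,\mathbb{K})_j$, whose $\mathbb{K}$-dimension is by definition $\beta_{ij}^S(M)$.

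The entire content is the bookkeeping of the two gradings — homological versus internal — together with the observation that the Koszul differential respects the internal grading once $\wedge^i V$ is placed in internal degree $i$; there is no genuine obstacle, only the need to fix conventions so that the index $j-i$ lands in the right place. If one prefers to avoid quoting the balancing of $\mathrm{Tor}$, the same conclusion follows by tensoring a minimal graded free resolution $F_\bullet\to M$ with $\mathbb{K}$ and comparing homology, but the Koszul route produces the stated complex on the nose.
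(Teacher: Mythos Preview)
Your argument is correct and is exactly the standard Koszul-complex computation of graded Betti numbers; the paper does not give its own proof but simply cites \cite[Proposition~2.7]{Eisenbud}, whose content is precisely the identification you spell out. So your proposal matches what the paper defers to via reference.
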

\begin{proof}
    See \cite[Proposition 2.7]{Eisenbud}.
\end{proof}

\begin{proposition}\label{prop: reg}
    For every homogenous ideal $I\subseteq S=\mathbb{K}[x_1,\dots,x_d]$ that is generated in degree at most $\kappa$, then $\mathrm{reg}_S(I)\leq d(\kappa-1)+1$ if $d\leq 3$, and $\mathrm{reg}_S(I)\leq [3\kappa^2(\kappa-1)]^{2^{d-4}}+1$ if $d\geq 4$.
\end{proposition}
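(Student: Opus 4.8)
The plan is to bound $\mathrm{reg}_S(S/I)$ and then invoke $\mathrm{reg}_S(I)=\mathrm{reg}_S(S/I)+1$, valid for $0\neq I\subsetneq S$ (immediate from the long exact $\mathrm{Tor}$-sequence of $0\to I\to S\to S/I\to 0$, or from Proposition \ref{prop: complex}; the cases $I=0$ and $I=S$ being trivial). I would bound $\mathrm{reg}_S(S/I)$ by induction on $d$, treating $d\le 3$ directly and reducing $d\ge 4$ to $d-1$ by cutting with a general linear form. Crucially, every estimate will be independent of $\mathrm{char}\,\mathbb{K}$, which is exactly what the application demands; in particular I would not pass to a generic initial ideal at any point.

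\emph{Base cases $d\le 3$.} First strip off common factors: writing $I=F\cdot I'$ with $F$ the greatest common divisor of a homogeneous generating set and $I'=(I:F)$, multiplication by $F$ is an isomorphism of graded modules $I'(-\deg F)\xrightarrow{\sim}I$, so $\mathrm{reg}_S(I)=\deg F+\mathrm{reg}_S(I')$, while $I'$ is generated in degrees $\le\kappa-\deg F$ and the greatest common divisor of its generators is a unit. In $\mathbb{K}[x_1,\dots,x_d]$ with $d\le 3$, an ideal with this last property has no minimal prime of height $1$ (height-one primes are principal), so either $I'=S$, or $S/I'$ is Artinian, or $d=3$ and $\dim S/I'=1$. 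If $I'=S$ then $I=(F)$ and $\mathrm{reg}_S(I)=\deg F\le\kappa\le d(\kappa-1)+1$. If $S/I'$ is Artinian, let $\kappa'\le\kappa-\deg F$ be the top generating degree of $I'$; then $V(I'_{\kappa'})$ is empty in $\mathbb{P}^{d-1}$, so a general $d$-tuple in $I'_{\kappa'}$ forms a regular sequence whose Koszul complex is a minimal free resolution, and Proposition \ref{prop: complex} (or direct inspection) gives $\mathrm{reg}_S$ of the corresponding complete intersection equal to $d(\kappa'-1)$; since $S/I'$ is an Artinian quotient of it, $\mathrm{reg}_S(S/I')\le d(\kappa'-1)$, whence $\mathrm{reg}_S(I)=\deg F+\mathrm{reg}_S(S/I')+1\le d(\kappa-1)+1$. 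Finally, when $d=3$ and $\dim S/I'=1$, the scheme $V(I')\subset\mathbb{P}^2$ is finite, lies on a plane curve of degree $\le\kappa'$, and is contained in the complete intersection of two coprime forms of degree $\le\kappa'$ taken from $I'$; the saturation $(I')^{\mathrm{sat}}$ defines this finite scheme and $S/(I')^{\mathrm{sat}}$ is Cohen--Macaulay of dimension one, so from the Hilbert--Burch theorem together with the B\'ezout bound and a linkage (mapping-cone) estimate one gets $\mathrm{reg}_S((I')^{\mathrm{sat}})\le 3\kappa'-2$, and comparing $I'$ with its saturation (they agree in all but finitely many degrees, all below this bound) yields $\mathrm{reg}_S(I')\le 3\kappa'-2$ and hence again $\mathrm{reg}_S(I)\le 3(\kappa-1)+1$.

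\emph{Inductive step $d\ge 4$.} Stripping common factors as above, we may assume $\mathrm{ht}(I)\ge 2$; when $S/I$ is Artinian the complete-intersection comparison already gives $\mathrm{reg}_S(S/I)\le d(\kappa-1)$, so assume $\dim S/I\ge 1$. Pick a general linear form $\ell$, put $\bar S=S/(\ell)\cong\mathbb{K}[x_1,\dots,x_{d-1}]$ and $\bar I=(I+(\ell))/(\ell)$, still generated in degrees $\le\kappa$, so the inductive hypothesis bounds $\mathrm{reg}_{\bar S}(\bar S/\bar I)$. The heart of the step is the characteristic-free comparison $\mathrm{reg}_S(S/I)\le\kappa^2\cdot\mathrm{reg}_{\bar S}(\bar S/\bar I)$: from the exact sequence $0\to\bigl((I:\ell)/I\bigr)(-1)\to(S/I)(-1)\xrightarrow{\ell}S/I\to\bar S/\bar I\to 0$ one sees that past degree $\mathrm{reg}_{\bar S}(\bar S/\bar I)$ the Hilbert function of $S/I$ is a partial sum of that of $\bar S/\bar I$, hence stabilizes with stable value bounded by a binomial coefficient in $\mathrm{reg}_{\bar S}(\bar S/\bar I)$; this bounds the length and top degree of the finite-length module $H^0_{\mathfrak m}(S/I)=(I:\ell^\infty)/I$, and the Bayer--Stillman criterion ($I$ is $m$-regular iff $\bar I$ is and $(I:\ell)_m=I_m$) turns this into the stated regularity bound. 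Feeding in the base value $\mathrm{reg}_{\bar S}(\bar S/\bar I)\le 3(\kappa-1)$ at $d-1=3$ gives $\mathrm{reg}_S(S/I)\le 3\kappa^2(\kappa-1)$ at $d=4$; for $d-1\ge 4$ the factor $\kappa^2$ is dominated by the (already doubly-exponential) bound on $\mathrm{reg}_{\bar S}(\bar S/\bar I)$, so the recursion collapses to $\mathrm{reg}_S(S/I)\le\mathrm{reg}_{\bar S}(\bar S/\bar I)^2$, which unwinds to $\mathrm{reg}_S(S/I)\le[3\kappa^2(\kappa-1)]^{2^{d-4}}$ and hence $\mathrm{reg}_S(I)\le[3\kappa^2(\kappa-1)]^{2^{d-4}}+1$.

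The step I expect to be the main obstacle is the characteristic-free comparison $\mathrm{reg}_S(S/I)\lesssim\kappa^2\cdot\mathrm{reg}_{\bar S}(\bar S/\bar I)$ in the induction: one must control both the regularity and the length of $H^0_{\mathfrak m}(S/I)=(I:\ell^\infty)/I$ using only $(d-1)$-variable data and the degree bound $\kappa$, with no recourse to generic initial ideals or any other device sensitive to $\mathrm{char}\,\mathbb{K}$. The low-dimensional base cases, by contrast, reduce via the greatest common divisor to the Artinian situation and thence to complete intersections, which are described explicitly by Koszul complexes; the only genuinely non-formal input there is the height-$2$ case in three variables, which rests on the Hilbert--Burch structure theorem and a B\'ezout estimate.
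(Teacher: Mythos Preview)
The paper does not prove this proposition; it simply cites \cite[Example~3.6]{Chardin:regularity}. Your plan---induct on $d$ via a general hyperplane section, control $H^0_{\mathfrak m}(S/I)$ separately, and use the Bayer--Stillman $m$-regularity criterion rather than generic initial ideals so that everything is characteristic-free---is precisely the strategy of that reference (and of Caviglia--Sbarra before it), so you have correctly identified both the method and the step you flag as the main obstacle.

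Where your sketch is not yet a proof is the asserted inequality $\mathrm{reg}_S(S/I)\le\kappa^2\cdot\mathrm{reg}_{\bar S}(\bar S/\bar I)$. The hyperplane-section argument bounds $\mathrm{end}\,H^0_{\mathfrak m}(S/I)$ via the \emph{length} of that module, which in turn is controlled by partial sums of the Hilbert function of $\bar S/\bar I$ up to degree $r:=\mathrm{reg}_{\bar S}(\bar S/\bar I)$; this is a polynomial of degree $d-2$ in $r$, not $\kappa^2 r$, and your heuristic ``stable value bounded by a binomial coefficient in $r$'' does not produce the factor $\kappa^2$. You still obtain a doubly-exponential recursion, but hitting exactly $[3\kappa^2(\kappa-1)]^{2^{d-4}}$ requires the sharper module-theoretic estimates in Chardin--Fall--Nagel; if you only need \emph{some} doubly-exponential bound (which suffices for the application here, where $\kappa=2$ and only an explicit numerical bound on $p$ is wanted), the Caviglia--Sbarra bound $(2\kappa)^{2^{d-2}}$ is easier to derive cleanly. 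A smaller gap sits in your $d=3$ base case: passing from $\mathrm{reg}((I')^{\mathrm{sat}})$ back to $\mathrm{reg}(I')$ requires an independent bound on $\mathrm{end}\,H^0_{\mathfrak m}(S/I')$, and your sentence ``they agree in all but finitely many degrees, all below this bound'' assumes what is to be proved. One fix is to link $I'$ through the complete intersection $(f,g)$ of two general generators and bound $\mathrm{reg}(I')$ via the mapping cone on $(f,g):I'$.
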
 
\begin{proof}
    See \cite[Example 3.6]{Chardin:regularity}.
\end{proof}

\subsection{Good filtrations and reductive group actions}\ 

Let $G$ be a connected and  reductive group over $\mathbb{K}$. By a $G$-module we mean a rational $G$-module. Fix a Borel subgroup $B$ and a maximal torus $T\subseteq B$. 
Let $X(T)$ be the character group of $T$. We choose an ordering of roots such that the roots in $B$ are negative. 
    For every dominant weight $\lambda\in X(T)^+$, $H^0(\lambda)$ denotes an induced module $\mathrm{Ind}_B^G \mathbb{K}_{\lambda}$, where $\mathbb{K}_{\lambda}=\mathbb{K}$ is the $B$-module on which $T$ acts via $\lambda$.

\begin{definition}
     A rational $G$-module $M$ has a good filtration if there is an ascending chain $0= M_0\subseteq M_1\subseteq\dots\subseteq M$ of submodules with $\cup_{i\in \mathbb{N}} M_i = M$ such that for any $i>0$ the module $M_i/M_{i-1}$ is either zero or isomorphic to $H^0(\lambda_i)$ for some $\lambda_i\in X(T)^+$.
\end{definition}

\begin{proposition}\label{prop: good filtration}
    Let $M,N$ be $G$-modules. Then
    \begin{enumerate}
        \item If $M$ has a good filtration, then $H^i(G,M\otimes H^0(\lambda))=0$ for all $i>0$ and $\lambda\in X(T)^+$.
        \item If $M$ has countable dimension and $H^1(G,M\otimes H^0(\lambda))=0$ for all $\lambda\in X(T)^+$, then $M$ has a good filtration,
        \item If $M$ and $N$ have good filtrations, then $M\otimes N$, considered under the diagonal action of $G$, also has a good filtration.
        \item Let $V=\mathbb{K}^n$ and $G$ be one of groups $GL(V)$, $SL(V)$, $SO(V)$ or $Sp(V)$. Then $\wedge(V)$, $\wedge(V^*)$, $S(V^*\otimes V)$  have good filtrations, where $\wedge$ and $S$ denote the exterior and symmetric powers.
    \end{enumerate}
\end{proposition}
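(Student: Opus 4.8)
The plan is to assemble the four assertions from the standard theory of good filtrations for connected reductive groups rather than to prove anything from scratch; among them only (4) for the symplectic and orthogonal groups requires genuine care. Throughout I would use the elementary orthogonality: writing $V(\mu)$ for the Weyl module of highest weight $\mu$ and $w_0$ for the longest element of $W$, one has $H^0(\lambda)^*\cong V(-w_0\lambda)$ and $\mathrm{Ext}^i_G(V(\mu),H^0(\lambda))=0$ for all $i>0$.

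For (1): if $M$ has a good filtration, then splicing the long exact $\mathrm{Ext}$-sequences along the filtration (with a colimit step for the infinite case) and using the orthogonality above gives $\mathrm{Ext}^i_G(V(\mu),M)=0$ for every dominant $\mu$ and every $i>0$; taking $V(\mu)=H^0(\lambda)^*$ yields $H^i(G,M\otimes H^0(\lambda))=\mathrm{Ext}^i_G(H^0(\lambda)^*,M)=0$. For (2): when $M$ is finite-dimensional this is the classical cohomological criterion for the existence of a good filtration (Jantzen, \emph{Representations of Algebraic Groups}, II.4.16); for $M$ of merely countable dimension one reduces to that case by a standard colimit argument, writing $M$ as an increasing union of finite-dimensional submodules, and I would simply cite the references (Donkin, and van der Kallen's lectures on Hilbert's fourteenth problem).

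Assertion (3) is the tensor product theorem for good filtrations — the tensor product of two modules with good filtrations has a good filtration — which I would invoke from Mathieu's theorem (with the earlier contributions of Wang and of Donkin, who had settled all but a handful of small cases) and add nothing.

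For (4), the exterior algebras are the easy part. For $GL(V)$ and $SL(V)$ each $\wedge^i V$ is the induced module attached to a fundamental weight, and $\wedge^i V^*\cong\wedge^{n-i}V\otimes\det^{-1}$ is again of this form, so $\wedge(V)$ and $\wedge(V^*)$ are direct sums of modules $H^0(\lambda)$ and hence have good filtrations. For $Sp(V)$ and $SO(V)$ one moreover has $V\cong V^*$, and each $\wedge^i V$ is a tilting module (Donkin), hence has a good filtration, so $\wedge(V)\cong\wedge(V^*)$ does too. The substantive step is that $S(V^*\otimes V)$ has a good filtration. Here I would use the Cauchy filtration of $S^m(V^*\otimes V)$ as a $GL(V)\times GL(V)$-module, whose successive quotients are the external tensor products $\nabla_\lambda(V^*)\otimes\nabla_\lambda(V)$ over partitions $\lambda$ of $m$, each $\nabla_\lambda$ being an induced module (Akin--Buchsbaum--Weyman, Boffi; see also Donkin's monograph); this is a good filtration of $S^m(V^*\otimes V)$ over $GL(V)\times GL(V)$, restricting along the diagonal and applying (3) makes it a good filtration over $GL(V)$, and summing over $m$ proves the claim for $GL(V)$. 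For $Sp(V)$ and $SO(V)$ I would either restrict this filtration, using $V\cong V^*$ and the fact that $\nabla_\lambda(V)$ has a good filtration for these groups (Donkin), or bypass it by quoting Donkin's result that the coordinate ring of $M_n$ under conjugation by $GL_n$, $SO_n$ or $Sp_n$ has a good filtration (``Invariants of several matrices''). I expect the only real friction to be the bookkeeping of determinant twists and of the identifications $V\cong V^*$ in the exterior-algebra and Cauchy-filtration steps for the non-linear groups; the rest is a direct appeal to established results.
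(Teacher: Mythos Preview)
Your proposal is correct. For parts (1)--(3) you and the paper do the same thing: cite the standard cohomological characterisation of good filtrations (Donkin) and the Wang--Donkin--Mathieu tensor product theorem; your sketches simply spell out what those references contain.

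The only genuine divergence is in part (4) for $SO(V)$ and $Sp(V)$. The paper's route is shorter and more uniform: it first settles the $GL(V)$/$SL(V)$ case via Andersen--Jantzen (their 4.9 for $\wedge(V)$, $\wedge(V^*)$, and 4.3(2) for $S(V^*\otimes V)$), and then transfers all three statements to $SO(V)$ and $Sp(V)$ in one stroke by invoking the result of Hague--McNinch that these are \emph{good-filtration subgroups} of $GL(V)$, i.e.\ restriction of any $GL(V)$-module with a good filtration again has one. Your approach instead handles $SO(V)$/$Sp(V)$ directly, arguing that each $\wedge^i V$ is tilting and that the Cauchy filtration of $S(V^*\otimes V)$ survives restriction (or alternatively citing Donkin's matrix-invariants paper). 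Both work; the paper's method avoids the case-by-case bookkeeping you anticipated (determinant twists, the tilting claim for exterior powers, tracking the isomorphism $V\cong V^*$), at the cost of importing a slightly more specialised reference.
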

\begin{proof}
    (1) See \cite[Proposition 1.2a]{Donkin:good fil}. 
    (2) See \cite[Proposition 1.2a]{Donkin:good fil}. 
    (3) This result was first obtained for $\mathrm{char}\mathbb{K}\gg 0$ by J. Wang [\ref{Wang}], with improvements to the prime $p$ by S. Donkin [\ref{Donkin:rep}]  (under some small restrictions), and in general by O. Mathieu \cite[Theorem 1]{Mathieu:Fil}.
    (4) It follows from \cite[4.9]{Andersen:ind rep} that if $G=GL(V)$ or $SL(V)$ then $\wedge(V)$ and $\wedge(V^*)$ have good filtrations, then $S(V^*\otimes V)$ has a good filtration by \cite[4.3(2)]{Andersen:ind rep}. So if $G=SO(V)$ or $Sp(V)$ then $\wedge(V)$ , $\wedge(V^*)$ and $S(V^*\otimes V)$ have good filtrations by \cite[3.2.6]{H-M:good fil subgroup}. 
\end{proof}

\begin{remark}
    Note that $H^0(0)=\mathbb{K}$ is a trivial $G$-module. Hence if $M$ has a good filtration, then $H^i(G,M)=0$ for all $i>0$.
\end{remark}

\begin{proposition}\label{prop: semisimple}
     If $V_{1},\dots,V_r$ are semisimple $G$-modules, and $i_{1},\dots,i_r\geq 0$ integers with
\[
\sum_{j=1}^r i_{j}(\dim(V_{j})-i_{j})<\mathrm{char}\ \mathbb{K},
\]
then $\otimes_{j=1}^r\wedge^{i_j}V_{j}$ is semisimple.
 \end{proposition}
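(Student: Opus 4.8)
The plan is to reduce the statement to a known result about semisimplicity of exterior powers and tensor products of semisimple modules, for which the relevant bound on $\mathrm{char}\ \mathbb{K}$ is precisely $\sum_j i_j(\dim V_j - i_j)$. First I would recall the basic principle, going back to Serre and developed by Serre and others (see also work of Jantzen and McNinch), that semisimplicity of representations of a reductive group in characteristic $p$ can be controlled by a numerical invariant attached to the module. Specifically, for a semisimple $G$-module $V$ of dimension $m$, the exterior power $\wedge^i V$ is semisimple provided $p > i(m-i)$; this is the content of Serre's theorem on exterior powers (the number $i(m-i)$ being the relevant "Jordan--H\"older length bound" or, equivalently, the largest value attained by a certain quadratic form governing the situation). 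For tensor products, Serre's criterion states that if $V_1, \dots, V_r$ are semisimple and $\sum_j (\dim V_j - 1)$ is small relative to $p$ — more precisely, if the tensor product stays below the relevant threshold — then $\bigotimes_j V_j$ is semisimple.

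The key step is to combine these two facts additively. I would argue as follows: treat $\bigotimes_{j=1}^r \wedge^{i_j} V_j$ as a submodule (or summand, after choosing an $SL_2$-datum / using saturation arguments) built from the $V_j$. The numerical invariant — call it $n(M)$ — that controls semisimplicity is subadditive under tensor product and satisfies $n(\wedge^i V) \le i(\dim V - i)$ for $V$ semisimple. Hence $n\bigl(\bigotimes_j \wedge^{i_j} V_j\bigr) \le \sum_j i_j(\dim V_j - i_j)$, and the hypothesis $\sum_j i_j(\dim V_j - i_j) < \mathrm{char}\ \mathbb{K}$ then forces $n < p$, which by Serre's semisimplicity criterion gives that the module is semisimple. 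Concretely, one either cites Serre's paper directly (the statement there is formulated exactly with this quantity for exterior powers, and the tensor-product version follows from the "sum of invariants" estimate) or one cites the McNinch reformulation, which packages precisely this kind of additive bound.

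The main obstacle I anticipate is bookkeeping the correct numerical invariant and checking that it behaves additively in the way claimed — in particular, verifying that $\wedge^i$ of a semisimple module contributes exactly $i(\dim V - i)$ and not something larger, and that passing to tensor products does not introduce cross terms. A subtlety is that $\wedge^{i_j} V_j$ need not itself be a single simple module, so one must work with the whole module rather than its composition factors; the invariant $n(-)$ is designed to handle this, but one has to be careful that it is the right invariant (it is the maximum over composition factors of a "highest weight height" type quantity, or equivalently an $\mathfrak{sl}_2$-weight bound). Once the correct invariant and its additivity/monotonicity properties are in hand, the proof is a one-line numerical inequality. I would therefore organize the write-up as: (i) state the invariant and its two properties with references, (ii) apply monotonicity under $\wedge^{i_j}$, (iii) apply subadditivity under $\otimes$, (iv) invoke the hypothesis and conclude via Serre's criterion.
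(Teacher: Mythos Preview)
The paper does not supply its own argument for this proposition: its entire proof is the single line ``See \cite[p.~26]{Serre}.'' Your proposal is a sketch of the argument that the cited reference contains, so in that sense you are aligned with the paper --- you are simply unpacking what the paper chooses to black-box. The invariant you describe is essentially Serre's: one attaches to a $G$-module $V$ the quantity $n(V)$ given by the maximal weight appearing in the restriction of $V$ along homomorphisms $SL_2\to G$; Serre proves that if $V$ is semisimple with $n(V)<p$ then $V$ is ``$p$-small,'' that $n(V\otimes W)\le n(V)+n(W)$, that $n(\wedge^i V)\le i(\dim V - i)$ for $V$ semisimple, and that a tensor product of semisimple $p$-small modules with total invariant below $p$ is again semisimple. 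Your steps (i)--(iv) match this exactly.

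One caution on your write-up: the bound $n(\wedge^i V)\le i(\dim V-i)$ is not obtained by viewing $\wedge^i V$ as a summand of $V^{\otimes i}$ (that would give the worse bound $i(\dim V-1)$ and would also require $p>i$), but rather by a direct weight computation on the restriction to each $SL_2$ --- the top $i$ weights of an $SL_2$-module of dimension $m$ sum to at most $i(m-i)$. If you write this out, make that point explicit; otherwise a reader may think there is a gap. But since the paper itself just cites Serre, for the purposes of matching the paper a citation suffices.
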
   
\begin{proof}
    See \cite[p.26]{Serre}.
\end{proof}

\subsection{Schur functor and The Decomposition of the Exterior Algebra}

\begin{definition}
    A partition is an element $\lambda=(\lambda_1,\dots,\lambda_r)\in \mathbb{N}^r$ for some $r\geq 1$, such that $\lambda_1\geq \lambda_2\geq\dots\geq\lambda_r$. The weight of the partition $\lambda$, denoted by $|\lambda|$, is the sum $\sum_{i=1}^r\lambda_i$. If  $|\lambda|=n$, $\lambda$ is said to be a partition of $n$. The number of non-zero terms of $\lambda$ is called the length of $\lambda$. The conjugate $\tilde{\lambda}$ of $\lambda$ is an element $\tilde{\lambda}=(\tilde{\lambda}_1,\dots,\tilde{\lambda}_s)\in \mathbb{N}^s$, where $\tilde{\lambda}_j$ is the number of terms of $\lambda$ which are greater than or equal to $j$.
\end{definition}

\begin{definition}
    If $F$ is a free module over a commutative ring $R$, and $\lambda=(\lambda_1,\dots,\lambda_q)$ is a partition, we use the following  notation:
    \begin{align*}
        \wedge_{\lambda}F & = \wedge^{\lambda_1}F\otimes_R\dots \otimes_R \wedge^{\lambda_q}F \\
        S_{\lambda}F & = S^{\lambda_1}F\otimes_R\dots \otimes_R S^{\lambda_q}F \\
        D_{\lambda}F & = D^{\lambda_1}F\otimes_R\dots \otimes_R D^{\lambda_q}F 
    \end{align*}
where $\wedge,S$ and $D$ denote the exterior, symmetric and divided powers. 
Let $GL(F)$ be the set of all $R$-linear automorphisms of $F$.
We can define the morphisms of $GL(F)$-modules
\[
d_{\lambda}(F):\wedge_{\lambda}F\to  S_{\Tilde{\lambda}}F, d'_{\lambda}(F):D_{\lambda}F\to  \wedge_{\Tilde{\lambda}}F 
\]
associated to the partition $\lambda$ and the free module $F$. For more details about the definitions of divided power and morphisms $d_{\lambda}(F),d'_{\lambda}(F)$, we refer to \cite{Akin:Schur functor}.
The image of $d_{\lambda}(F)$ is defined  to be the Schur functor of $F$ with respect to the partition $\lambda$, and is denoted  by $L_{\lambda}F$. The image of $d'_{\lambda}(F)$ is defined to be the  coSchur functor of $F$ with respect to the  partition $\lambda$, and is denoted by $K_{\lambda}F$. $L_{\lambda}F(K_{\lambda}F)$ is also called the Schur (coSchur) functor  of shape $\lambda$.

\end{definition}

\begin{proposition}\label{prop: decomposition}
    Let $F,G$ be finitely generated free $R$-modules. The  exterior  algebra $\wedge (F\otimes G)$ is  naturally a $GL(F)\times GL(G)$-module. Then there is a universal  filtration of $\wedge^k (F\otimes G)$ by $GL(F)\times GL(G)$-modules  whose associated graded object is $\sum_{|\lambda|=k} L_{\lambda}F\otimes K_{\lambda}G$. 
\end{proposition}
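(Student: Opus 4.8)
This is the characteristic-free dual Cauchy decomposition of the exterior algebra of a tensor product, and the plan is to establish it as in Akin--Buchsbaum--Weyman \cite{Akin:Schur functor}, whose framework produces exactly such a filtration; I sketch the steps and indicate where the difficulty sits. First I would reduce to a universal situation. Exterior powers commute with arbitrary base change, and so do the Schur and coSchur functors: by their very construction $L_\lambda F$ and $K_\lambda F$ are universally free, their formation commuting with $R\to R'$, which is part of the basic package of \cite{Akin:Schur functor}. Hence it is enough to produce the filtration, functorially, for $R=\mathbb{Z}$ with $F=\mathbb{Z}^m$ and $G=\mathbb{Z}^n$; since its successive quotients will again be universally free, the whole filtration then base-changes to the asserted one over every $R$. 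Within this reduction, all character and rank computations may be carried out after $-\otimes_{\mathbb{Z}}\mathbb{Q}$, where $\wedge^k(F\otimes G)\otimes\mathbb{Q}$ is the usual $GL_m\times GL_n$-module $\bigoplus_{|\lambda|=k}L_\lambda F\otimes L_{\tilde\lambda}G\otimes\mathbb{Q}$ and $K_\lambda(-)\otimes\mathbb{Q}\cong L_{\tilde\lambda}(-)\otimes\mathbb{Q}$; on characters the claimed decomposition is then just the dual Cauchy identity $\prod_{i,j}(1+x_iy_j)=\sum_{\lambda}s_\lambda(x)\,s_{\tilde\lambda}(y)$ read off in degree $k$.

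To build the filtration over $\mathbb{Z}$, I would order the partitions $\lambda\vdash k$ by the dominance order and, for each $\lambda$, assemble a $GL(F)\times GL(G)$-equivariant map relating $\wedge^k(F\otimes G)$ to $L_\lambda F\otimes K_\lambda G$ out of the iterated comultiplication of the exterior algebra $\wedge(F\otimes G)$ together with the canonical structure maps $d_\lambda(F)\colon\wedge_\lambda F\to S_{\tilde\lambda}F$ (surjecting onto $L_\lambda F$) and $d'_\lambda(G)\colon D_\lambda G\to\wedge_{\tilde\lambda}G$ (with image $K_\lambda G$); all of these are equivariant by construction. Setting $\mathrm{Fil}_{\geq\lambda}$ to be the submodule generated by the contributions of all $\lambda'\geq\lambda$, the goal is $\mathrm{Fil}_{\geq\lambda}/\mathrm{Fil}_{>\lambda}\cong L_\lambda F\otimes K_\lambda G$. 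That this graded piece is a quotient through which the corresponding map factors is formal; the real content is that the dominance truncations are honest $GL(F)\times GL(G)$-submodules and that the successive quotient is \emph{exactly} $L_\lambda F\otimes K_\lambda G$. One proves this by exhibiting, on both sides, explicit $\mathbb{Z}$-bases indexed by standard tableaux of the relevant shapes and matching them; the rank count behind the bijection is the character identity of the first paragraph, while the integral sharpening --- that the maps are isomorphisms over $\mathbb{Z}$, not merely rationally --- comes from the straightening laws.

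The main obstacle is precisely this last combinatorial input: the straightening-law bookkeeping with shapes and tableaux, and the mutual compatibility of the $F$-side maps $d_\lambda$ with the $G$-side maps $d'_\lambda$. This is the machinery developed in \cite{Akin:Schur functor}, where the analogous and logically prior Cauchy decomposition of the symmetric algebra $S_k(F\otimes G)$ is proved first and the present exterior version is obtained in the same framework. Accordingly I would take the existence of the filtration and the identification of its associated graded object from that reference, using the sketch above only as orientation.
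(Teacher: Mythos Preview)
Your proposal is correct and takes essentially the same approach as the paper: the paper's proof consists solely of the citation ``See \cite[Theorem III.2.4]{Akin:Schur functor}'', and your sketch is precisely an outline of the argument from that reference, ending with the same citation. Your added orientation (reduction to $\mathbb{Z}$, dual Cauchy identity on characters, dominance-ordered filtration, straightening laws) accurately reflects the Akin--Buchsbaum--Weyman machinery and adds nothing incorrect.
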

\begin{proof}
    See \cite[Theorem \uppercase\expandafter{\romannumeral3}.2.4]{Akin:Schur functor}.
\end{proof}

\section{Main theorems}\label{sec:main theorems}
Suppose $n\geq 2, \ d\geq 1$ are   positive integers. Let 
\begin{displaymath}
O_n(\mathbb{K}):=\{A\in M_n(\mathbb{K})\:|\: A A^t=I_n\}
\end{displaymath}
be the orthogonal group, and if $n$ is even, let
\begin{displaymath}
Sp_n(\mathbb{K}):=\{A\in M_n(\mathbb{K})\:|\: A^t J A =I_n\}
\end{displaymath}
be the symplectic group, where $J=\begin{pmatrix}
0& I_{\frac{n}{2}}\\ 
-I_{\frac{n}{2}}&0
\end{pmatrix}$. 
 
 Throughout this section, $G$ is  one of the following groups:
\begin{displaymath}
G=
\begin{cases}
GL_n(\mathbb{K}), \\
O_n(\mathbb{K}), \\ 
SO_n(\mathbb{K}),\\
Sp_n(\mathbb{K}), & n \textmd{   even}. 
\end{cases}
\end{displaymath}

Let $B$ be a Borel subgroup of $G$, $T\subseteq B$ be a maximal torus of $G$ and $W=N_G(T)/T$ be the Weyl group. Let $X(T)$ be the character group of $T$. We choose an ordering of roots such that the roots in $B$ are negative. 
As usual,  $\mathfrak{g}$ denotes the Lie algebra of $G$. Explicitly, 
\begin{displaymath}
\mathfrak{g}=\begin{cases}
\mathfrak{gl}_n, \ \textmd{ if } G \ \textmd{ is } GL_n(\mathbb{K});\\
\mathfrak{so}_n, \ \textmd{ if } G \ \textmd{ is } O_n(\mathbb{K}) \textmd{ or } SO_n(\mathbb{K}); \\ 
\mathfrak{sp}_n, \ \textmd{ if } G \ \textmd{ is } Sp_n(\mathbb{K}),
\end{cases}
\end{displaymath}
where we fix the realizations of the Lie algebras as matrices:
\begin{displaymath}
\mathfrak{so}_n:=\{A\in M_n(\mathbb{K}\:)|\: A+A^t=0\}, \ \mathfrak{sp}_n:=\{A\in M_n(\mathbb{K})\:|\: A^t J+ J A=0\}.
\end{displaymath}

 The Lie algebra $\mathfrak{t}$ of $T$ is  a Cartan subalgebra of $\mathfrak{g}$. Via the diagonal adjoint  representation, the group $G$ acts on $\mathfrak{g}^d$ and this induces  an action of $W$ on $\mathfrak{t}^d$. 
 
 For $1\leq k\leq d$, \  $1\leq i,j\leq n$, let $x(k)_{ij}$ be the polynomial function of $M_n(\mathbb{K})^d$ whose value at a point $(A_1,\cdots, A_d)\in M_n(\mathbb{K})^d$ is the $(i,j)$-entry of the matrix $A_k\in M_n(\mathbb{K})$. Let 
 \[
 R=\mathbb{K}[M_n(\mathbb{K})^d],\ \Bar{R}=\mathbb{K}[\mathfrak{g}^d],\ I'=\ker(R\to \Bar{R}).
 \]

 Over the  ring $\mathbb{K}[M_n(\mathbb{K})^d]$, consider the  "generic" $n\times n$ matrices $X(1)$, $X(2)$, $\cdots$, $X(d)$, such that  the $(i,j)$-entry of $X(k)$ is $x(k)_{ij}$. 
 Let $E$ be the $\mathbb{K}$-linear space spanned by all of the entries of the matrices $[X(k), X(l)]:=X(k)X(l)-X(l)X(k), \ 1\leq k<l\leq d$, and $I$ be the homogeneous ideal of $\mathbb{K}[M_n(\mathbb{K})^d]$ generated by $I'$ and $E$. We define the quotient ring 
\[
\mathbb{K}[\mathfrak{C}^d_{\mathfrak{g}}]:=\mathbb{K}[M_n(\mathbb{K})^d]/I=\mathbb{K}[\mathfrak{g}^d]/\Bar{I},
\]
 where $\Bar{I}=I/I'$. This ring can be viewed as the coordinate ring of the commuting scheme $\mathfrak{C}^d_{\mathfrak{g}}$.

Let $G$ act on $M_n(\mathbb{K})^d$ via simultaneous conjugation: $g\cdot(A_1,\cdots, A_d)=(gA_1 g^{-1},\cdots, gA_d g^{-1})$, \ for $g\in G$, \ $(A_1,\cdots, A_d)\in M_n(\mathbb{K})^d$. Then the inclusion $\mathfrak{g}^d\subset M_n(\mathbb{K})^d$ induces a restriction homomorphism 
\[
\varphi: \mathbb{K}[M_n(\mathbb{K})^d]^G\rightarrow \mathbb{K}[\mathfrak{C}^d_{\mathfrak{g}}]^G.
\]

Since obviously $I$ is $G$-invariant, we have the induced action of $G$ on $\mathbb{K}[\mathfrak{C}^d_{\mathfrak{g}}]$. Moreover, the action preserves the degrees on $\mathbb{K}[\mathfrak{C}^d_{\mathfrak{g}}]$, so that the invariant subring $\mathbb{K}[\mathfrak{C}^d_{\mathfrak{g}}]^G$ is still a graded $\mathbb{K}$-algebra whose degree zero part is equal to $\mathbb{K}$. Any polynomial function on $\mathfrak{g}^d$ restricts to a polynomial function on $\mathfrak{t}^d$ through the inclusion $\mathfrak{t}^d\subset \mathfrak{g}^d$, and the restriction homomorphism $\mathbb{K}[\mathfrak{g}^d]\rightarrow \mathbb{K}[\mathfrak{t}^d]$ factors through $\mathbb{K}[\mathfrak{C}^d_{\mathfrak{g}}]$. This induces  the following  restriction homomorphism between the invariant rings:
\begin{displaymath}
\Phi: \mathbb{K}[\mathfrak{C}^d_{\mathfrak{g}}]^G \rightarrow \mathbb{K}[\mathfrak{t}^d]^W.
\end{displaymath}

We will need the following lemma. 

\begin{lemma}\label{lemma:submod good fil}
    Given positive integers $m,m',\alpha,\beta$. Let $G$ be connected and act on  $V=\mathbb{K}^n$ in  the  natural  way.
    If $\mathrm{char}\ \mathbb{K}>2\alpha (n-1)+\frac{m(n^3-n)}{3}$, then any $G$-submodule of $((V^*\otimes V)^{\otimes \alpha}\otimes\wedge^{\beta}((V^*\otimes V)^{\oplus m}))^{\oplus m'}$ has a good filtration.
\end{lemma}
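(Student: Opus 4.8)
The plan is to reduce the claim to Proposition~\ref{prop: good filtration} (semisimplicity via Proposition~\ref{prop: semisimple}, plus closure of good filtrations under submodules in the semisimple range) by carefully controlling the weights that occur in $\wedge^{\beta}((V^*\otimes V)^{\oplus m})$. First I would record the reduction: since $G$ is connected reductive acting naturally on $V=\mathbb{K}^n$ and $V^*\otimes V$ is a good-filtration module (Proposition~\ref{prop: good filtration}(4)), and since a submodule of a semisimple module is a direct summand and hence has a good filtration, it suffices to show that $(V^*\otimes V)^{\otimes\alpha}\otimes\wedge^{\beta}((V^*\otimes V)^{\oplus m})$ is a semisimple $G$-module (the outer $\oplus m'$ is harmless).

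Next I would decompose the exterior power. Write $\wedge^{\beta}((V^*\otimes V)^{\oplus m})=\bigoplus_{\beta_1+\cdots+\beta_m=\beta}\wedge^{\beta_1}(V^*\otimes V)\otimes\cdots\otimes\wedge^{\beta_m}(V^*\otimes V)$, so by Proposition~\ref{prop: semisimple} it is enough to bound, for each such tuple and each way of writing $V^*\otimes V$-factors and the $\alpha$ copies of $V^*\otimes V$ as exterior powers, the sum $\sum i_j(\dim V_j-i_j)$. The subtle point is that $V^*\otimes V$ itself is \emph{not} irreducible in general (in characteristic $p$ it has $V^*\otimes V\supseteq\mathbb{K}\cdot\mathrm{id}$ when $p\mid n$, and more relevantly one must use a composition series), so to apply Proposition~\ref{prop: semisimple} one should first pass to a composition series of $V^*\otimes V$ as a $G$-module, note each composition factor is a submodule of $\wedge^{a}V^*\otimes\wedge^{b}V$-type pieces, and observe that each irreducible constituent $L$ satisfies $i(\dim L-i)\le \dim L-1\le n^2-1$... but this crude bound is too weak. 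Instead the key is to use the refined decomposition of $\wedge^{\beta_j}(V^*\otimes V)$ via Proposition~\ref{prop: decomposition}: its composition factors are (sub/quotients of) $L_{\lambda}V^*\otimes K_{\lambda}V$ with $|\lambda|=\beta_j$, and for such a piece the relevant ``$i(\dim-i)$''-type contribution is governed by the combinatorics of $\lambda$, which is exactly what Lemma~\ref{lemma:max} is designed to control: the partition $\lambda$ with parts arranged as a strictly increasing/decreasing sequence bounded by $n$ gives the extremal value $\frac{n^3-n}{3}$ per copy of $V^*\otimes V$, and the $m$ summands contribute $\frac{m(n^3-n)}{3}$ while the $\alpha$ factors of $V^*\otimes V$ (each split as $\wedge^iV^*\otimes\wedge^jV$) contribute at most $2\alpha(n-1)$ since $i(n-i)\le$ something of that order after the right bookkeeping.

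Concretely, the heart of the argument is: for a tensor product $\bigotimes_k \wedge^{i_k}V_k$ of exterior powers of the irreducible constituents $V_k$ of $V$, $V^*$, and of the $L_\lambda$, $K_\lambda$ pieces, the quantity $\sum_k i_k(\dim V_k-i_k)$ is maximized by a configuration whose value is precisely the bound $2\alpha(n-1)+\frac{m(n^3-n)}{3}$ appearing in the hypothesis; then Proposition~\ref{prop: semisimple} applies verbatim under the strict inequality $\mathrm{char}\,\mathbb{K}>2\alpha(n-1)+\frac{m(n^3-n)}{3}$. I would organize this as: (i) reduce to semisimplicity of the module without the $\oplus m'$; (ii) for the $\wedge^\beta((V^*\otimes V)^{\oplus m})$ factor, use Proposition~\ref{prop: decomposition} applied with $F=V^*$, $G=V$ (or the appropriate versions for $SO_n$, $Sp_n$, where one restricts the $GL$-filtration), reducing to tensor products of $L_\lambda V^*\otimes K_\lambda V$; (iii) estimate the exterior-power exponents, invoking Lemma~\ref{lemma:max} to get the $\frac{n^3-n}{3}$ per factor (note $F_m$ in that lemma is exactly the bound $\sum i_r(\dim-i_r)$ for the constituents of $L_\lambda\otimes K_\lambda$); (iv) add the $\le 2\alpha(n-1)$ coming from the $(V^*\otimes V)^{\otimes\alpha}$ factor, viewing each $V^*\otimes V$ through its composition factors sitting inside $\wedge V^*\otimes\wedge V$; (v) conclude via Proposition~\ref{prop: semisimple}, then Proposition~\ref{prop: good filtration}.

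The main obstacle I expect is step (iii): matching the abstract extremal bound of Lemma~\ref{lemma:max} with the actual weights/dimensions appearing in the composition factors of $L_\lambda V^*\otimes K_\lambda V$ inside $\wedge^\beta$, and handling the non-$GL$ cases ($SO_n$, $Sp_n$) where one must restrict the universal $GL$-filtration and argue that restriction to $G$ does not spoil semisimplicity in the stated range — here one again uses Proposition~\ref{prop: good filtration}(4) together with the fact that a $G$-submodule of a semisimple $G$-module is again semisimple, so that it suffices to keep the exterior-power exponents in the semisimple range for $G$ itself, which has the same $\dim V=n$.
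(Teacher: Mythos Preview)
Your outline assembles the right ingredients—the direct-sum splitting over $\beta_1+\cdots+\beta_m=\beta$, the Schur-functor filtration of Proposition~\ref{prop: decomposition}, Lemma~\ref{lemma:max} to bound the contribution of each $\wedge_{\lambda}V^*\otimes\wedge_{\tilde\lambda}V$, and Serre's criterion (Proposition~\ref{prop: semisimple}) applied with the simple modules $V,V^*$—and these are precisely the tools the paper uses. The gap is in your opening reduction: ``it suffices to show that $(V^*\otimes V)^{\otimes\alpha}\otimes\wedge^{\beta}((V^*\otimes V)^{\oplus m})$ is semisimple.'' Proposition~\ref{prop: decomposition} gives only a \emph{filtration} of $\wedge^{\beta_j}(V^*\otimes V)$ with layers $L_\lambda V^*\otimes K_\lambda V$, not a direct-sum decomposition, and semisimplicity of the layers does not by itself force the extensions to split. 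Nor can you apply Proposition~\ref{prop: semisimple} directly at the level of $\wedge^{\beta_j}(V^*\otimes V)$: taking $V_j=V^*\otimes V$ would require $\sum_j\beta_j(n^2-\beta_j)<p$, a bound of order $mn^4$, far beyond the hypothesis $\tfrac{m(n^3-n)}{3}$.

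The paper avoids this by never claiming global semisimplicity. It sets up a downward induction on the number $r$ of tensor slots already Schur-decomposed, proving for each $r$ that \emph{every $G$-submodule} of
\[
N^r_{\lambda_1,\dots,\lambda_r}=(V^*\otimes V)^{\otimes\alpha}\otimes\Bigl(\bigotimes_{i=1}^r L_{\lambda_i}V^*\otimes K_{\lambda_i}V\Bigr)\otimes\Bigl(\bigotimes_{j=r+1}^m\wedge^{i_j}(V^*\otimes V)\Bigr)
\]
has a good filtration. At $r=m$ one is in a subquotient of $(V^*\otimes V)^{\otimes\alpha}\otimes\bigotimes_i(\wedge_{\lambda_i}V^*\otimes\wedge_{\tilde\lambda_i}V)$, which is genuinely of the form $\bigotimes_j\wedge^{i_j}V_j$ with each $V_j\in\{V,V^*\}$; here Lemma~\ref{lemma:max} shows the Serre sum is at most $2\alpha(n-1)+\tfrac{m(n^3-n)}{3}$, so this ambient module is semisimple \emph{and} (by Proposition~\ref{prop: good filtration}(3),(4)) has a good filtration, whence every submodule of its subquotient $N^m$ does too. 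For $r<m$, one filters $N^r$ by pieces isomorphic to various $N^{r+1}$; a submodule $M\subseteq N^r$ then acquires an induced filtration whose layers are submodules of $N^{r+1}$'s, and the cohomological criterion in Proposition~\ref{prop: good filtration} glues their good filtrations into one for $M$. Thus the property propagated through the filtration is ``every submodule has a good filtration,'' not semisimplicity. Your steps (ii)--(iv) are exactly the computation needed for the base case $r=m$; what has to replace your semisimplicity reduction is this inductive scaffolding. (Incidentally, your concern about the non-$GL$ cases is unnecessary: the whole argument uses only that $V$ and $V^*$ are simple $G$-modules of dimension $n$, which holds uniformly.)
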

\begin{proof}
    Since the $G$-module $((V^*\otimes V)^{\otimes \alpha}\otimes\wedge^{\beta}((V^*\otimes V)^{\oplus m}))^{\oplus m'}$ is isomorphic to $\oplus_{k=1}^{m'}\oplus_{i_1+\dots+i_m=\beta} ((V^*\otimes V)^{\otimes \alpha})\otimes (\otimes_{j=1}^m  \wedge^{i_j}(V^*\otimes V))$, any $G$-submodule $M$ of  $((V^*\otimes V)^{\otimes \alpha}\otimes\wedge^{\beta}((V^*\otimes V)^{\oplus m}))^{\oplus m'}$ has a filtration $0=M_0\subseteq M_1\subseteq \dots \subseteq M_t=M$ such that $M_i/M_{i-1}$ is a $G$-submodule of $((V^*\otimes V)^{\otimes \alpha})\otimes (\otimes_{j=1}^m  \wedge^{i_j}(V^*\otimes V))$ for some $i_1+\dots+i_m=\beta$.
    
    By Proposition \ref{prop: good filtration}, it suffices to show that given positive integers $i_{1},\dots,i_m$, for any $0\leq r\leq m$, any $G$-submodule of 
    \[
    N^r_{\lambda_1,\dots,\lambda_r}:=
    ((V^*\otimes V)^{\otimes\alpha})\otimes (\otimes_{i=1}^r (L_{\lambda_i}V^*\otimes K_{\lambda_i}V))\otimes (\otimes_{j=r+1}^{m}  \wedge^{i_j}(V^*\otimes V))
    \]
    has a good filtration for any partitions $\lambda_1,\dots,\lambda_r$. If $r=m$, for any partitions $\lambda_{1},\dots,\lambda_{m}$, we may assume 
    \[
    \lambda_i=(\underbrace{a_{i1},\dots,a_{i1}}_{b_{i1}},\underbrace{a_{i2},\dots,a_{i2}}_{b_{i2}},\dots,\underbrace{a_{il_i},\dots,a_{il_i}}_{b_{il_i}}), \ 1\leq i\leq m,
    \]
    where $a_{i1}>\dots >a_{il_i}>0$ for any $1\leq i\leq m$. Then for any $1\leq i\leq m$,
    \[
    \tilde{\lambda}_i=(\underbrace{\sum_{j=1}^{l_i}b_{ij},\dots,\sum_{j=1}^{l_i}b_{ij}}_{a_{il_i}},\underbrace{\sum_{j=1}^{l_i-1}b_{ij},\dots,\sum_{j=1}^{l_i-1}b_{ij}}_{a_{i,l_i-1}-a_{il_i}},\dots,\underbrace{b_{i1},\dots,b_{i1}}_{a_{i1}-a_{i2}}).
    \]
    Since for any $1\leq i\leq m$ the $G$-module $L_{\lambda_i}V^*$ is a quotient of $\wedge_{\lambda_i}V^*$ and $K_{\lambda_i}V\subseteq \wedge_{\tilde{\lambda}_i}V$, we may assume $a_{i1},\sum_{j=1}^{l_i}b_{ij}\leq n$ for any $1\leq i\leq m$ and $N^m_{\lambda_1,\dots,\lambda_m}$ is a quotient of some submodule of $\otimes_{i=1}^m(\wedge_{\lambda_i}V^*\otimes \wedge_{\tilde{\lambda}_i}V)$.
    By Lemma \ref{lemma:max}, we have
    \begin{align*}
        & \mathrm{char}\ \mathbb{K}  > 2\alpha(n-1)+\frac{m(n^3-n)}{3} \geq 2\alpha(n-1)   \\
         & +\sum_{i=1}^m\left(\sum_{k=1}^{l_i} b_{ik}a_{ik}(n-a_{ik})+(a_{ik}-a_{i,k+1})\sum_{j=1}^{k}b_{ij}(n- \sum_{j=1}^{k}b_{ij})\right)\\
    \end{align*}
    where $a_{1,l_1+1}=\dots=a_{m,l_m+1}=0$.
    Note that $V$ and $V^*$ are simple.
    By Proposition \ref{prop: good filtration} and Proposition \ref{prop: semisimple}, we see that $N^m_{\lambda_1,\dots,\lambda_m}$ is semisimple and has a good filtration. Hence any $G$-submodule of $N^m_{\lambda_1,\dots,\lambda_m}$ has a good filtration.

    Assume it is true for every integer $k$ with $0\leq r<k\leq m$. For any partitions $\lambda_1,\dots,\lambda_r$, 
    by Proposition \ref{prop: decomposition}, the $G$-module $N^r_{\lambda_1,\dots,\lambda_r}$ has a filtration $0=N_0\subseteq N_1\subseteq \dots \subseteq N_s=N^r_{\lambda_1,\dots,\lambda_r}$ such that $N_{j}/N_{j-1}$ is $N^{r+1}_{\lambda_1,\dots,\lambda_r,\eta_j}$ for some partition $\eta_j$. Applying the induction hypothesis to $N^{r+1}_{\lambda_1,\dots,\lambda_r,\eta_j},1\leq j\leq s$, we obtain that any $G$-submodule of $N_j$ has a good filtration for any $1\leq j\leq s$ by Proposition \ref{prop: good filtration}. This completes the proof.

\end{proof}

\begin{theorem}\label{main thm: good fil}
If $\mathrm{char}\ \mathbb{K}>2(\mathrm{reg}_{\Bar{R}}(\Bar{I})-1)(n-1)+\frac{d(n^3-n)}{3}$ and $G$ is connected, then $\Bar{I}$ and $I$ have good filtrations.
\end{theorem}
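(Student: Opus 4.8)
The plan is to deduce good filtrations for $\bar{I}$ and $I$ from a $G$-equivariant minimal graded free resolution of the coordinate ring $\mathbb{K}[\mathfrak{C}^{d}_{\mathfrak{g}}]=\bar{R}/\bar{I}=R/I$, bounding the internal degrees of the syzygies by $\mathrm{reg}_{\bar{R}}(\bar{I})$ and recognizing each syzygy module as a $G$-subquotient of one of the modules handled by Lemma~\ref{lemma:submod good fil}. The basic tool is the following observation, immediate from Proposition~\ref{prop: good filtration}(1),(2) together with the long exact cohomology sequence of $H^{\bullet}(G,-\otimes H^{0}(\lambda))$: for a short exact sequence $0\to A\to B\to C\to 0$ of countable-dimensional $G$-modules, if $A$ and $B$ have good filtrations then so does $C$ (since $H^{1}(G,B\otimes H^{0}(\lambda))=0=H^{2}(G,A\otimes H^{0}(\lambda))$ force $H^{1}(G,C\otimes H^{0}(\lambda))=0$ for all $\lambda$), and if $A$ and $C$ have good filtrations then so does $B$. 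Two consequences: (a) if $0\to F_{c}\to\cdots\to F_{0}\to M\to 0$ is exact with every $F_{i}$ having a good filtration, then, splitting it into short exact sequences and repeatedly applying the first point, every syzygy kernel — in particular $M$ — has a good filtration; (b) since the modules appearing in Lemma~\ref{lemma:submod good fil} lie, together with all their $G$-submodules, in the class of good-filtration modules, applying the first point to $0\to Q'\to P'\to P'/Q'\to 0$ for $G$-submodules $Q'\subseteq P'$ of such a module $P$ shows that \emph{every $G$-subquotient} of $P$ has a good filtration whenever $\mathrm{char}\,\mathbb{K}>2\alpha(n-1)+\frac{m(n^{3}-n)}{3}$, with $\alpha,m$ the corresponding parameters.

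For the ambient rings, $R=\mathbb{K}[M_{n}(\mathbb{K})^{d}]\cong S((V^{*}\otimes V)^{\oplus d})$ has a good filtration by Proposition~\ref{prop: good filtration}(3),(4), and so does $\bar{R}=\mathbb{K}[\mathfrak{g}^{d}]$: for $G=GL_{n}$ one has $\bar{R}=R$, while for the remaining connected classical groups $G=SO_{n},Sp_{n}$ the adjoint module $\mathfrak{g}$ is a $G$-direct summand of $\mathfrak{gl}_{n}\cong V^{*}\otimes V$ (as $\mathrm{char}\,\mathbb{K}\ne 2$ and $V$ is self-dual), so $\bar{R}$ is a $G$-direct summand of $R$ and inherits a good filtration. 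Moreover $\bar{I}$ is generated in degree $2$ (by the entries of the commutators $[X(k),X(l)]$), so $\mathrm{reg}_{\bar{R}}(\bar{R}/\bar{I})=\mathrm{reg}_{\bar{R}}(\bar{I})-1$.

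Now fix a $G$-equivariant minimal graded free resolution $0\to F_{c}\to\cdots\to F_{1}\to F_{0}\to\bar{R}/\bar{I}\to 0$ over $\bar{R}$, with $F_{i}=\bar{R}\otimes_{\mathbb{K}}T_{i}$, $T_{i}:=\mathrm{Tor}_{i}^{\bar{R}}(\bar{R}/\bar{I},\mathbb{K})$ a finite-dimensional graded $G$-module, and $F_{0}=\bar{R}$. By Proposition~\ref{prop: complex}, $(T_{i})_{j}$ is a $G$-subquotient of $(\bar{R}/\bar{I})_{j-i}\otimes\wedge^{i}(\bar{R}_{1})$, which vanishes unless $0\le j-i\le\mathrm{reg}_{\bar{R}}(\bar{R}/\bar{I})=\mathrm{reg}_{\bar{R}}(\bar{I})-1$. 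Since $\bar{R}_{1}$ embeds $G$-equivariantly into $(V^{*}\otimes V)^{\oplus d}$, $\wedge^{i}(\bar{R}_{1})$ embeds into $\wedge^{i}((V^{*}\otimes V)^{\oplus d})$, and $(\bar{R}/\bar{I})_{j-i}$, being a quotient of $S^{j-i}(\bar{R}_{1})$, is a $G$-subquotient of $((V^{*}\otimes V)^{\otimes(j-i)})^{\oplus d^{\,j-i}}$; hence $(T_{i})_{j}$ is a $G$-subquotient of $\big((V^{*}\otimes V)^{\otimes(j-i)}\otimes\wedge^{i}((V^{*}\otimes V)^{\oplus d})\big)^{\oplus d^{\,j-i}}$, i.e.\ of the module in Lemma~\ref{lemma:submod good fil} with $\alpha=j-i\le\mathrm{reg}_{\bar{R}}(\bar{I})-1$, $\beta=i$, $m=d$, $m'=d^{\,j-i}$. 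As $\mathrm{char}\,\mathbb{K}>2(\mathrm{reg}_{\bar{R}}(\bar{I})-1)(n-1)+\frac{d(n^{3}-n)}{3}\ge 2(j-i)(n-1)+\frac{d(n^{3}-n)}{3}$, consequence (b) shows that $(T_{i})_{j}$, hence $T_{i}$, hence $F_{i}=\bar{R}\otimes T_{i}$ (Proposition~\ref{prop: good filtration}(3)), has a good filtration. By (a), $\bar{R}/\bar{I}$ has a good filtration, and so does $\bar{I}$, since its tail $0\to F_{c}\to\cdots\to F_{1}\to\bar{I}\to 0$ resolves $\bar{I}=\ker(F_{0}\to\bar{R}/\bar{I})$.

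It remains to treat $I\subseteq R$. If $G=GL_{n}$ then $R=\bar{R}$ and $I=\bar{I}$. Otherwise $I'=\ker(R\to\bar{R})$ is generated by a $G$-stable subspace $W\subseteq R_{1}$ of linearly independent linear forms, so its Koszul complex $0\to\wedge^{c}W\otimes R\to\cdots\to\wedge^{1}W\otimes R\to I'\to 0$ is a $G$-equivariant resolution; each $\wedge^{k}W$ is a $G$-subquotient of $\wedge^{k}((V^{*}\otimes V)^{\oplus d})$ (the Lemma~\ref{lemma:submod good fil} module with $\alpha=0$), hence has a good filtration, so $\wedge^{k}W\otimes R$ does by Proposition~\ref{prop: good filtration}(3), so $I'$ does by (a); finally the extension $0\to I'\to I\to\bar{I}\to 0$ (recall $\bar{I}=I/I'$) yields a good filtration on $I$. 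The delicate part I anticipate is the good-filtration bookkeeping of the first paragraph — in particular passing from ``all $G$-submodules'' to ``all $G$-subquotients'' of the Lemma~\ref{lemma:submod good fil} modules and climbing the resolution — together with the point that one must bound the regularity of the \emph{quotient}, $\mathrm{reg}_{\bar{R}}(\bar{R}/\bar{I})=\mathrm{reg}_{\bar{R}}(\bar{I})-1$, which is exactly what confines the syzygy degrees $j-i$ to the range allowed by the hypothesis on $\mathrm{char}\,\mathbb{K}$; the $G$-equivariance of the minimal free resolution is a further, standard, technical point.
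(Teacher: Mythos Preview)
Your approach is conceptually clean and uses exactly the same key tools (Proposition~\ref{prop: complex}, Lemma~\ref{lemma:submod good fil}, Proposition~\ref{prop: good filtration}) as the paper, but there is a genuine gap: the existence of a $G$-equivariant \emph{minimal} graded free resolution of $\bar R/\bar I$ is not the ``standard technical point'' you take it to be in positive characteristic. Building such a resolution step by step requires, for each syzygy $Z_{i-1}$, a $G$-equivariant section of $(Z_{i-1})_j\twoheadrightarrow (T_i)_j$ in every degree $j$ with $(T_i)_j\neq 0$; this amounts to the vanishing of $\mathrm{Ext}^1_G\big((T_i)_j,(\mathfrak{m}Z_{i-1})_j\big)$, and knowing that both pieces have good filtrations does not yield that (for instance $\mathrm{Ext}^1_G(H^0(\lambda),H^0(\mu))$ need not vanish). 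One might hope to force the splittings via semisimplicity (Proposition~\ref{prop: semisimple}), but the relevant degrees $j$ run up to $i+\mathrm{reg}_{\bar R}(\bar I)-1$ with $i$ as large as $\dim\bar R$, far beyond what the hypothesis on $\mathrm{char}\,\mathbb{K}$ controls. Your consequences (a), (b), the regularity bookkeeping, and the treatment of $I'$ via its Koszul complex are all fine and essentially match the paper.

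The paper sidesteps the issue entirely by never invoking a minimal resolution. It applies the \emph{manifestly} $G$-equivariant Koszul complex of Proposition~\ref{prop: complex} to $M=\bar I$ (not to $\bar R/\bar I$) and argues by induction on the degree $m$ of the graded piece $\bar I_m$. For $2\le m\le\alpha:=\mathrm{reg}_{\bar R}(\bar I)$ the hypothesis makes $(V^*\otimes V)^{\otimes(m-1)}$ semisimple, so its quotient $\bar I_m$ inherits a good filtration directly. For $m>\alpha$ the degree-$m$ complex $\cdots\to\bar I_{m-i}\otimes\wedge^i\bar R_1\to\cdots\to\bar I_m\to 0$ is exact (or, when $\alpha<m\le\alpha+\dim\bar R$, exact after inserting a single kernel $M$ at the $\bar I_\alpha$-term); the inductive hypothesis supplies good filtrations on every $\bar I_{m-i}$, Lemma~\ref{lemma:submod good fil} with tensor-parameter $0$ (resp.\ $\alpha-1$, which is exactly what the hypothesis on $\mathrm{char}\,\mathbb{K}$ is tuned to) handles each $\wedge^i\bar R_1$ (resp.\ the inserted kernel $M$), and your consequence~(a) finishes. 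No $G$-equivariant splitting is ever required.
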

\begin{proof}
    First, we will prove that $\Bar{I}$  has a good filtration.
    Let $\Bar{R}=\oplus_{i=0}^{\infty}\Bar{R}_i$, $\Bar{I}=\oplus_{i=2}^{\infty}\Bar{I}_i$, and $\alpha=\mathrm{reg}_{\Bar{S}}(\Bar{I})$. Let $G$ act on  $V=\mathbb{K}^n$ in  the  natural  way.
    Note that $\alpha\geq 2$. If $2\leq i\leq \alpha$, since the $G$-module $E$ is a quotient of $\oplus_{1\leq k<l\leq d}(V^*\otimes V)$, we see that the morphism of $G$-modules
    \[
    \oplus_{j\in \Lambda_i}((V^*\otimes V)^{\otimes (i-1)})\to \Bar{S}_{i-2}\otimes E\to \Bar{I}_i
    \]
    is surjective for some finite set $\Lambda_i$. Note that $\mathrm{char}\ \mathbb{K}>2(i-1)(n-1)$. Then $\Bar{I}_i$ has a good filtration by Proposition \ref{prop: good filtration} and Proposition \ref{prop: semisimple}.
    
    Assume that $\Bar{I}_k$ has a good filtration for every integer $k$ with $\alpha\leq k<m$. For any $i\geq 1$, since the $G$-module $ \wedge^i\Bar{R}_1$ is a quotient of $\wedge^i((V^*\otimes V)^{\oplus d})$, $\wedge^i\Bar{R}_1$ has a good filtration by Proposition \ref{prop: good filtration} and Lemma \ref{lemma:submod good fil}.
    If $m>\alpha+\dim(\Bar{R})$, by Proposition \ref{prop: complex}, the complex
    \begin{equation}\label{complex:m>}
        0\to \Bar{I}_{m-\dim(\Bar{R})}\otimes\wedge^{\dim(\Bar{R})}\Bar{R}_1\to\cdots\to \Bar{I}_m\to 0
    \end{equation}
    is exact, where $\Bar{R}_1\cong \mathbb{K}^{\dim(\Bar{R})}$. It is easy to verify that (\ref{complex:m>}) is a complex of $G$-modules. 
    Applying the induction hypothesis, we obtain that $\Bar{I}_m$ has a good filtration by (\ref{complex:m>}) and Proposition \ref{prop: good filtration}.
    If $\alpha< m\leq \alpha+\dim(\Bar{R})$, by Proposition \ref{prop: complex}, there is an exact sequence of $G$-modules
    \begin{equation}\label{complex:<m<}
        0\to M\to \Bar{I}_{\alpha}\otimes\wedge^{m-\alpha}\Bar{R}_1\to\cdots\to \Bar{I}_m\to 0,
    \end{equation}
    where $M=\ker(\Bar{I}_{\alpha}\otimes\wedge^{m-\alpha}\Bar{R}_1\to \Bar{I}_{\alpha+1}\otimes\wedge^{m-\alpha-1}\Bar{R}_1)$. 
    Since the morphism of $G$-modules 
    \[
    \oplus_{j\in \Lambda_\alpha}((V^*\otimes V)^{\otimes (\alpha-1)})\otimes\wedge^{m-\alpha}((V^*\otimes V)^{\oplus d})\to\Bar{I}_{\alpha}\otimes\wedge^{m-\alpha}\Bar{R}_1
    \]
    is surjective, we see that $M$ has a good filtration by Lemma \ref{lemma:submod good fil}. 
    Applying the induction hypothesis, we obtain that $\Bar{I}_m$ has a good filtration by (\ref{complex:<m<}) and Proposition \ref{prop: good filtration}.

    Finally, we will prove that $I$  has a good filtration. Note that the $G$-module $\Bar{I}$ is the quotient of $I$ by $ I'$. 
    It suffices to show that $I'$  has a good filtration by Proposition \ref{prop: good filtration}. Let $I'=\oplus_{i=1}^{\infty}I'_i$.
    Note that $I'$ is the ideal of $R$ generated by $I'_1$ and the Koszul complex
    \begin{equation}\label{conplex:I'}
        0\to R\otimes\wedge^{\dim(I'_1)}I'_1\to R\otimes\wedge^{\dim(I'_1)-1}I'_1\to\cdots\to R\otimes I'_1\to I'\to 0
    \end{equation}
    is exact. It is easy to verify that (\ref{conplex:I'}) is a complex of $G$-modules. 
    Since $\mathrm{char}\ \mathbb{K}>2(n-1)$, $(V^*\otimes V)^{\oplus d}$ is semisimple by Proposition \ref{prop: semisimple}.
    For any $i\geq 1$, since $I'_1$ is $G$-submodule of $(V^*\otimes V)^{\oplus d}$, $\wedge^i I'_1$ is $G$-submodule of $\wedge^i((V^*\otimes V)^{\oplus d})$ and has a good filtration by Lemma \ref{lemma:submod good fil}. 
    Note that $R\cong (S(V^*\otimes V))^{\otimes d}$ has a good filtration by Proposition \ref{prop: good filtration}.
    So $I'$  has a good filtration by (\ref{conplex:I'}) and Proposition \ref{prop: good filtration}.
\end{proof}

Now consider the composition of homomorphisms:
\begin{displaymath}
\mathbb{K}[M_n(\mathbb{K})^d]^{G }\xrightarrow{ \ \varphi \ } \mathbb{K}[\mathfrak{C}^d_{\mathfrak{g}}]^{G} \xrightarrow{\ \Phi \ } \mathbb{K}[\mathfrak{t}^d]^W.
\end{displaymath}

\begin{theorem}\label{main thm}
If $\mathrm{char}\ \mathbb{K}>(12)^{2^{d\cdot\dim(\mathfrak{g})-4}}(2n-2)+\frac{d(n^3-n)}{3}$, then the restriction homomorphism $\Phi: \mathbb{K}[\mathfrak{C}^d_{\mathfrak{g}}]^G\rightarrow \mathbb{K}[\mathfrak{t}^d]^W$ is an isomorphism of $\mathbb{K}$-algebras. 
\end{theorem}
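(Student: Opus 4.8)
The plan is to deduce Theorem \ref{main thm} from Theorem \ref{main thm: good fil} together with the known characteristic-zero statement that $\Phi: \mathfrak{t}^d/\!\!/W\to \mathfrak{C}^d_{\mathfrak{g}}/\!\!/G$ is an isomorphism for classical groups, using a standard good-filtration reduction modulo $p$. First, I would observe that the composite $\mathbb{K}[M_n(\mathbb{K})^d]^G \xrightarrow{\varphi} \mathbb{K}[\mathfrak{C}^d_{\mathfrak{g}}]^G \xrightarrow{\Phi} \mathbb{K}[\mathfrak{t}^d]^W$ is surjective: indeed, the first arrow is the restriction of $G$-invariant polynomials from $M_n(\mathbb{K})^d$ to $\mathfrak{g}^d$, and for classical groups $\mathfrak{g}$ one has an explicit $G$-equivariant splitting (trace/polarization of the standard symmetric or symplectic form) expressing every element of $M_n(\mathbb{K})^d$ via a $\mathfrak{g}$-component, so invariants on $\mathfrak{g}^d$ are restrictions of invariants on $M_n(\mathbb{K})^d$; and then the generation of $\mathbb{K}[\mathfrak{t}^d]^W$ by restrictions of polynomial invariants of generic matrices (e.g. coefficients of the characteristic polynomial of $\sum_k t_k X(k)$, or the relevant Pfaffian-type invariants) is the First Fundamental Theorem in the relevant characteristic. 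So it suffices to prove that $\Phi$ is injective and surjective separately, and surjectivity of $\Phi$ follows once surjectivity of $\varphi\circ\Phi$ is established.

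For injectivity of $\Phi$, the key point is that both $\mathbb{K}[\mathfrak{C}^d_{\mathfrak{g}}]$ and $\mathbb{K}[\mathfrak{t}^d]$ carry good filtrations as $G$- (resp. $T$- via $W$-) modules under our hypothesis on $\mathrm{char}\ \mathbb{K}$, and taking $G$-invariants is exact on modules with good filtration. Concretely: Proposition \ref{prop: reg} gives $\mathrm{reg}_{\Bar{R}}(\Bar{I})\le (12)^{2^{\dim(\mathfrak{g}^d)-4}}+1 = (12)^{2^{d\cdot\dim(\mathfrak{g})-4}}+1$ because $\Bar{I}$ is generated in degree $\le 2$ (so one may take $\kappa=2$, $3\kappa^2(\kappa-1)=12$), and $\dim(\mathfrak{g}^d)=d\cdot\dim(\mathfrak{g})\ge 4$ in all our cases; substituting this bound into the hypothesis of Theorem \ref{main thm: good fil} shows that the assumption $\mathrm{char}\ \mathbb{K}>2(\mathrm{reg}_{\Bar{R}}(\Bar{I})-1)(n-1)+\frac{d(n^3-n)}{3}$ is satisfied whenever $\mathrm{char}\ \mathbb{K}>(12)^{2^{d\cdot\dim(\mathfrak{g})-4}}(2n-2)+\frac{d(n^3-n)}{3}$. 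Hence $\Bar{I}$ has a good filtration, so the short exact sequence $0\to\Bar{I}\to\Bar{R}\to\mathbb{K}[\mathfrak{C}^d_{\mathfrak{g}}]\to 0$ stays exact after applying $(-)^G$, and more importantly one gets a flatness/base-change statement: the ring $\mathbb{K}[\mathfrak{C}^d_{\mathfrak{g}}]^G$ and its Hilbert series are "independent of $p$" in the sense that they agree with what one obtains by reduction mod $p$ from a $\mathbb{Z}$-form, because the dimensions of the graded pieces of a good filtration of $\Bar{R}/\Bar{I}$ and of its $G$-invariants are computed by the same Weyl-character combinatorics in every characteristic satisfying the bound. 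The non-connected cases $G=O_n(\mathbb{K})$ are handled by first treating $G^\circ=SO_n(\mathbb{K})$ and then taking the further (order-$2$, hence $p$-invertible) quotient by $\pi_0(G)$.

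Putting these together, I would argue: over $\mathbb{C}$ the map $\Phi_{\mathbb{C}}$ is an isomorphism of graded rings (this is the cited work of Vaccarino/Domokos/Chen–Ng\^o for $GL_n$, Chen–Ng\^o for $Sp_n$, Song–Xia–Xu for $O_n,SO_n$); both source and target arise by base change from finitely generated flat $\mathbb{Z}$-algebras (or $\mathbb{Z}[1/N]$-algebras), and the good-filtration input guarantees that forming $G$-invariants commutes with the base change $\mathbb{Z}[1/N]\to\mathbb{K}$ under our characteristic bound, so the Hilbert series of $\mathbb{K}[\mathfrak{C}^d_{\mathfrak{g}}]^G$ and of $\mathbb{K}[\mathfrak{t}^d]^W$ equal their characteristic-zero counterparts; since $\Phi$ is surjective and in each degree the source and target have equal finite dimension, $\Phi$ is an isomorphism. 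I expect the main obstacle to be the base-change bookkeeping: making precise that "good filtration $\Rightarrow$ $(-)^G$ commutes with reduction mod $p$" in a way that simultaneously controls $\mathbb{K}[\mathfrak{C}^d_{\mathfrak{g}}]^G$ (via the good filtration of $\Bar R/\Bar I$, equivalently of $\Bar I$ and $\Bar R$) and $\mathbb{K}[\mathfrak t^d]^W$, and checking that the characteristic-zero isomorphism together with surjectivity in positive characteristic forces bijectivity in each graded degree rather than merely an inequality of dimensions. The verification that $\Bar I$ is generated in degree at most $2$ and that $\dim(\mathfrak g^d)\ge 4$ for all $n\ge 2$, $d\ge 1$ in the listed cases is routine and I would not dwell on it.
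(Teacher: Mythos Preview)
Your reduction of the numerical hypothesis of Theorem~\ref{main thm} to that of Theorem~\ref{main thm: good fil} via Proposition~\ref{prop: reg} (taking $\kappa=2$, so $3\kappa^2(\kappa-1)=12$, with $d\cdot\dim\mathfrak g$ variables) is correct, and so is your use of the good filtration to make $\bar R^G\to\mathbb K[\mathfrak C^d_{\mathfrak g}]^G$ surjective; both match the paper. The genuine gap is in your injectivity argument. You claim that a good filtration of $\bar I$ over $\mathbb K$ forces the Hilbert series of $\mathbb K[\mathfrak C^d_{\mathfrak g}]^G$ to agree with its characteristic-zero value. But a good filtration is a statement about the $G$-module structure \emph{over the fixed field $\mathbb K$}: it tells you that $\dim(\mathbb K[\mathfrak C^d_{\mathfrak g}]_m)^G$ equals the multiplicity of the trivial Weyl character in the formal $T$-character of $\mathbb K[\mathfrak C^d_{\mathfrak g}]_m$, and says nothing about whether that formal character coincides with the one in characteristic zero. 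For that you would need the integral commuting scheme to be flat at $p$, i.e.\ each finitely generated group $(\mathbb Z[\mathfrak C^d_{\mathfrak g}])_m$ to be $p$-torsion-free, and nothing in the good-filtration machinery bounds the primes where such torsion can occur. Semicontinuity only gives $\dim_{\mathbb K}\mathbb K[\mathfrak C^d_{\mathfrak g}]_m\ge\dim_{\mathbb C}\mathbb C[\mathfrak C^d_{\mathfrak g}]_m$, the wrong inequality for your dimension count. So ``good filtration $\Rightarrow$ $(-)^G$ commutes with reduction mod $p$'' is not bookkeeping; without an independent flatness input it is simply unavailable.

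The paper avoids this entirely by never comparing with characteristic zero. The theorems of Vaccarino (for $GL_n$) and of Song--Xia--Xu (for $Sp_n$, $O_n$, $SO_n$) that you invoke only over $\mathbb C$ are in fact proved over any algebraically closed field of characteristic $>2$: they show directly that $\Phi$ restricted to $\mathrm{Im}(\varphi)$ (for $GL_n$, to the subalgebra $C_P$ generated by characteristic-polynomial coefficients) is already an \emph{isomorphism} onto $\mathbb K[\mathfrak t^d]^W$. Hence injectivity on $\mathrm{Im}(\varphi)$ is in hand in characteristic $p$ from the outset, and the only missing piece is $\mathrm{Im}(\varphi)=\mathbb K[\mathfrak C^d_{\mathfrak g}]^G$, which is exactly the consequence $H^1(G,I)=0$ of Theorem~\ref{main thm: good fil}. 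The even-orthogonal case $SO_n$ with $n$ even is then handled by the $\mathbb Z/2\mathbb Z$ eigenspace decomposition under $O_n/SO_n$. No cross-characteristic Hilbert-series argument is needed.
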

\begin{proof}
If $G=GL_n(\mathbb{K})$, let $C_P$ be the subalgebra of $\mathbb{K}[\mathfrak{C}^d_{\mathfrak{g}}]^G$ generated by the coefficients of the characteristic polynomial of polynomials in generic commuting matrices. It follows from \cite[Theorem 1]{Vaccarino} that $\Phi$ induces an isomorphism $C_P\to\mathbb{K}[\mathfrak{t}^d]^{W}$. By Theorem \ref{main thm: good fil}, we have $H^1(G,I)=0$, then $\mathrm{Im}(\varphi)=\mathbb{K}[\mathfrak{C}^d_{\mathfrak{g}}]^G$, then $C_P=\mathbb{K}[\mathfrak{C}^d_{\mathfrak{g}}]^G$ by \cite{Donkin:matrix invariants}, so $\Phi:\mathbb{K}[\mathfrak{C}^d_{\mathfrak{g}}]^G\to \mathbb{K}[\mathfrak{t}^d]^W$ is an isomorphism of $\mathbb{K}$-algebras.

If $G$ is one of the following groups:
\begin{displaymath}
G=
\begin{cases}
O_n(\mathbb{K}), \\ 
SO_n(\mathbb{K}), & n \textmd{   odd},\\
Sp_n(\mathbb{K}), & n \textmd{   even}, 
\end{cases}
\end{displaymath}
 it follows from \cite[Theorem 3.4]{S-X-X} and its proof that $\Phi$ induces a $\mathbb{K}$-linear isomorphism $\mathrm{Im}(\varphi)\to\mathbb{K}[\mathfrak{t}^d]^{W}$. By Theorem \ref{main thm: good fil}, we have $H^1(G,I)=0$, then $\mathrm{Im}(\varphi)=\mathbb{K}[\mathfrak{C}^d_{\mathfrak{g}}]^G$, so $\Phi:\mathbb{K}[\mathfrak{C}^d_{\mathfrak{g}}]^G\to \mathbb{K}[\mathfrak{t}^d]^W$ is an isomorphism of $\mathbb{K}$-algebras. 
 
 If $G'=SO_n(\mathbb{K})$ and $n$ is even, let  $G=O_n(\mathbb{K})$ and $W^{\prime}=N_{G^{\prime}}(T)/T$. Note both $G^{\prime}\subset G$ and $W^{\prime}\subset W=N_{G}(T)/T$ are subgroups of index two. We have the following commutative diagram.
\begin{displaymath}
\begin{diagram}
\mathbb{K}[\mathfrak{C}^d_{\mathfrak{g}}]^{G^{\prime}} &\rTo^{\ \ \ \ \Phi \ \ \ \ } &\mathbb{K}[\mathfrak{t}^d]^{W^{\prime}}\\
\uInto_{} & &\uInto_{}\\
\mathbb{K}[\mathfrak{C}^d_{\mathfrak{g}}]^G & \rTo^{\ \ \ \ \Phi \ \ \ \ }&\mathbb{K}[\mathfrak{t}^d]^W
\end{diagram}
\end{displaymath}

Take $w_0\in W$ which generates the quotient group $W/W^{\prime}\simeq \mathbb{Z}/2\mathbb{Z}$. Note $W/W^{\prime}$ acts naturally on $\mathbb{K}[\mathfrak{t}^d]^{W^{\prime}}$ and we have the eigen-subspace decomposition
\[\mathbb{K}[\mathfrak{t}^d]^{W^{\prime}}=\mathbb{K}[\mathfrak{t}^d]^{W^{\prime}}_{(0)}\oplus \mathbb{K}[\mathfrak{t}^d]^{W^{\prime}}_{(1)},\]
where  $\mathbb{K}[\mathfrak{t}^d]^{W^{\prime}}_{(0)}=\mathbb{K}[\mathfrak{t}^d]^W$ is the invariant part and $\mathbb{K}[\mathfrak{t}^d]^{W^{\prime}}_{(1)}=\{v\in \mathbb{K}[\mathfrak{t}^d]^{W^{\prime}}\mid w_0 v=-v\}$. Similarly, the action of $G/G^{\prime}\simeq \mathbb{Z}/2\mathbb{Z} $ on $\mathbb{K}[\mathfrak{C}^d_{\mathfrak{g}}]^{G^{\prime}}$ induces the eigen-subspace decomposition $\mathbb{K}[\mathfrak{C}^d_{\mathfrak{g}}]^{G^{\prime}}=\mathbb{K}[\mathfrak{C}^d_{\mathfrak{g}}]^{G^{\prime}}_{(0)}\oplus \mathbb{K}[\mathfrak{C}^d_{\mathfrak{g}}]^{G^{\prime}}_{(1)}$, with  $\mathbb{K}[\mathfrak{C}^d_{\mathfrak{g}}]^{G^{\prime}}_{(0)}=\mathbb{K}[\mathfrak{C}^d_{\mathfrak{g}}]^G$  and $\mathbb{K}[\mathfrak{C}^d_{\mathfrak{g}}]^{G^{\prime}}_{(1)}=\{v\in \mathbb{K}[\mathfrak{C}^d_{\mathfrak{g}}]^{G^{\prime}}\mid g_0 v=-v\}$, where $g_0$ is a generator of $G/G ^{\prime}$. Clearly the restriction homomorphism $\Phi$ preserves these decompositions: $\Phi(\mathbb{K}[\mathfrak{C}^d_{\mathfrak{g}}]^{G^{\prime}}_{(i)})\subset \mathbb{K}[\mathfrak{t}^d]^{W^{\prime}}_{(i)}$, \ $i=0,1$. 
Now consider the composition of homomorphisms:
\begin{displaymath}
\mathbb{K}[M_n(\mathbb{K})^d]^{G ^{\prime}}\xrightarrow{ \ \varphi \ } \mathbb{K}[\mathfrak{C}^d_{\mathfrak{g}}]^{G^{\prime}} \xrightarrow{\ \pi_1 \ } \mathbb{K}[\mathfrak{C}^d_{\mathfrak{g}}]^{G^{\prime}}_{(1)},
\end{displaymath}
where $\pi_1$ is the projection under the decomposition $\mathbb{K}[\mathfrak{C}^d_{\mathfrak{g}}]^{G^{\prime}}=\mathbb{K}[\mathfrak{C}^d_{\mathfrak{g}}]^{G^{\prime}}_{(0)}\oplus \mathbb{K}[\mathfrak{C}^d_{\mathfrak{g}}]^{G^{\prime}}_{(1)} $.
It follows from \cite[Theorem 4.3]{S-X-X} and its proof that $\Phi$ induces a $\mathbb{K}$-linear isomorphism $\mathrm{Im}(\pi_1\circ \varphi)\to\mathbb{K}[\mathfrak{t}^d]^{W'}_{(1)}$. By Theorem \ref{main thm: good fil}, we have $H^1(G,I)=0$, then $\mathrm{Im}(\pi_1\circ \varphi)=\mathbb{K}[\mathfrak{C}^d_{\mathfrak{g}}]^{G'}_{(1)}$. Combining  with the isomorphism $\mathbb{K}[\mathfrak{C}^d_{\mathfrak{g}}]^{G^{\prime}}_{(0)}\to\mathbb{K}[\mathfrak{t}^d]^{W^{\prime}}_{(0)}$, we finish the proof.

\end{proof}


\begin{thebibliography}{}

\bibitem{Akin:Schur functor}\label{Akin:Schur functor}
K. Akin, D. A. Buchsbaum, J. Weyman,
\textit{Schur functors and Schur complexes}.
Adv. Math. \textbf{44} (1982), 207-278.


\bibitem{Andersen:ind rep}\label{Andersen:ind rep}
H.H. Andersen, J.C. Jantzen, \textit{Cohomology of induced representations for algebraic groups}. Math. Ann. \textbf{269} (1984), 487-525.



\bibitem{Berest et al.}\label{Berest et al.}
Yu. Berest, G. Felder, A. Patotski, A. C. Ramadoss and T. Willwacher,   \textit{Representation homology, Lie algebra cohomology
and the derived Harish-Chandra homomorphism}.  J. Eur. Math. Soc. \textbf{19} (2017), no. 9, 2811-2893.



\bibitem{Jean-Yves Charbonnel}\label{Charbonnel}
J.-Y. Charbonnel, \textit{Projective dimension and commuting variety of a reductive Lie algebra}.  arXiv:2006.12942.

\bibitem{Chardin:regularity}\label{Chardin:regularity}
M. Chardin, A. L. Fall, U. Nagel, \textit{Bounds for the Castelnuovo–Mumford regularity of modules}. Math. Z. \textbf{258} (2008), 69–80. 


\bibitem{Chen-Ngo}\label{Chen-Ngo}
T. H. Chen,  B. C. Ng\^{o},   \textit{On the Hitchin morphism for higher dimensional
varieties}.  Duke Math. J. \textbf{169} (2020), 1971-2004.

\bibitem{Chen-Ngo-Symplectic}\label{Chen-Ngo-Symplectic}
T. H. Chen,  B. C. Ng\^{o}, \textit{Invariant theory for the commuting scheme
of symplectic Lie algebras}. arXiv:2102.01849.


\bibitem{C-P-S-V:Rational and generic cohomology}\label{C-P-S-V:Rational and generic cohomology}
E. Cline, B. Parshall, L. Scott, W. van der Kallen,  \textit{Rational and generic cohomology} Invent. Math. \textbf{39} (2) (1977), 143-163.



\bibitem{Domokos}\label{Domokos}
M. Domokos,   \textit{Vector Invariants of a class of pseudoreflection groups and multisymmetric syzygies}.  J. Lie Theory \textbf{19} (2009), 507-525.


\bibitem{Donkin:filtration}\label{Donkin:filtration}
S. Donkin, \textit{A filtration for rational modules}. Math. Z. \textbf{177} (1981), 1-8.

\bibitem{Donkin:matrix invariants}\label{Donkin:matrix invariants}
S. Donkin, \textit{Invariant functions on matrices}. Math. Proc. Cambr. Philos. Soc. \textbf{113} (1993), 23-43. 

\bibitem{Donkin:rep}\label{Donkin:rep}
S. Donkin, \textit{Representations of Algebraic Groups: Tensor Products and Filtrations}. Lecture Notes in Math., vol. 1140,
Springer-Verlag, Berlin, Heidelberg, New York, 1985.

\bibitem{Donkin:good fil}\label{Donkin:good fil}
S. Donkin, \textit{The normality of closures of conjugacy classes of matrices}, Invent. Math. \textbf{101} (3) (1990), 717–736.



\bibitem{Eisenbud}\label{Eisenbud}
D. Eisenbud, \textit{The Geometry of Syzygies: A Second Course in Commutative Algebra and Algebraic Geometry}. Graduate Texts in Mathematics, vol. 229, Springer, New York, 2005.


\bibitem{Friedlander:filtration}\label{Friedlander:filtration}
E. Friedlander, \textit{A canonical filtration for certain rational modules}. Math. Z. \textbf{188} (1984/85), 433-438.


\bibitem{Gan-Ginzburg}\label{Gan-Ginzburg}
 W. L. Gan, V.  Ginzburg,   \textit{Almost-commuting variety, D-modules, and Cherednik algebras}.  Int. Math. Res. Not. IMRN \textbf{2006:2} (2006), 1-54.

\bibitem{H-M:good fil subgroup}\label{H-M:good fil subgroup}
C. Hague, G. McNinch, \textit{Some good-filtration subgroups of simple algebraic groups}. J. Pure Appl.
Algebra \textbf{217} (2013), 2400–2413.




\bibitem{Hunziker}\label{Hunziker}
M. Hunziker,   \textit{Classical invariant theory for finite reflection groups}.  Transform. Groups \textbf{2} (1997), 147-163.


\bibitem{Jantzen:rep}\label{Jantzen:rep}
 J. C. Jantzen, \textit{Representations of Algebraic Groups}, Academic Press, New York, 1987. 


\bibitem{L-N-Y:commuting stack}\label{L-N-Y:commuting stack}
P. Li, D. Nadler, Z. Yun, 
\textit{Functions on the commuting stack via Langlands duality}. arXiv:2301.02618v2.


\bibitem{Mathieu:Fil}\label{Mathieu:Fil}
O. Mathieu, \textit{Filtrations of G-modules}, Ann. Sci. \'{E}c. Norm. Sup. (4) \textbf{23} (4) (1990), 625–644.




\bibitem{Serre}\label{Serre}
J.-P. Serre, \textit{Moursund Lectures 1998}. arXiv:math/0305257.


\bibitem{S-X-X}\label{S-X-X}
L. Song, X. Xia, J. Xu,  \textit{A higher-dimensional Chevalley restriction theorem for orthogonal groups}. Adv. Math. \textbf{426} (2023), 109104.



\bibitem{Wang}\label{Wang}
J. Wang, \textit{Sheaf cohomology of G/B and tensor products of Weyl modules}. J. Algebra \textbf{77} (1982), 162-185.


\bibitem{Vaccarino}\label{Vaccarino}
F. Vaccarino, \textit{Linear representation, symmetric products and the commuting scheme}.  J. Algebra \textbf{317} (2007), 634-641.








\end{thebibliography}
\end{document}